\documentclass[11pt]{article}

\usepackage{fullpage}

\usepackage{amssymb,amsmath,amsthm,latexsym}
\usepackage{mathtools}
\usepackage{bm}
\usepackage{bbm}
\usepackage{xspace}

\usepackage{url}
\usepackage{xcolor}
\usepackage{graphicx}
\usepackage{subfig}
\usepackage{wrapfig}
\usepackage{tikz-cd}

\usepackage{algorithm}
\usepackage{algorithmic}

\usepackage{tabularx}
\usepackage{array}
\usepackage{makecell}
\usepackage{multirow}
\usepackage{caption}
\usepackage{bookmark}
\usepackage{nicefrac}
\usepackage{booktabs}
\usepackage{siunitx}

\usepackage[capitalise]{cleveref}
\usepackage{hyperref}
\definecolor{niceblue}{rgb}{0.0,0.19,0.56}
\hypersetup{
  colorlinks = true,
  linkcolor  = blue,
  citecolor  = niceblue,
  urlcolor   = blue
}

\usepackage{natbib}

\usepackage[most]{tcolorbox}
\usepackage{pifont}

\definecolor{bgcolor}{rgb}{0.8,1,1}
\definecolor{bgcolor2}{rgb}{0.8,1,0.8}
\definecolor{PineGreen}{RGB}{0,110,51}
\definecolor{BrickRed}{RGB}{143,20,2}

\newcolumntype{Y}{>{\centering\arraybackslash}X}

\usepackage{mdframed} 
\usepackage{thmtools}

\definecolor{shadecolor}{gray}{0.9}
\declaretheoremstyle[
headfont=\normalfont\bfseries,
notefont=\mdseries, notebraces={(}{)},
bodyfont=\normalfont,
postheadspace=0.5em,
spaceabove=1pt,
mdframed={
 skipabove=8pt,
 skipbelow=8pt,
 hidealllines=true,
 backgroundcolor={shadecolor},
 innerleftmargin=4pt,
 innerrightmargin=4pt}
]{shaded}

\declaretheorem[style=shaded,within=section]{definition}
\declaretheorem[style=shaded,sibling=definition]{theorem}
\declaretheorem[style=shaded,sibling=definition]{proposition}
\declaretheorem[style=shaded,sibling=definition]{assumption}
\declaretheorem[style=shaded,sibling=definition]{corollary}

\declaretheorem[style=shaded,sibling=definition]{lemma}
\declaretheorem[style=shaded,sibling=definition]{remark}

\newcommand*{\R}{{\mathbb R}}

\DeclareMathOperator*{\argmin}{argmin}

\renewcommand{\|}{\parallel}

\newcommand{\N}{\mathbb{N}}

\newtcolorbox{remarkbox}{
    colback=gray!20,    
    colframe=black,     
    boxrule=0.5mm,      
    sharp corners,      
    left=2mm,           
    right=2mm,          
    top=2mm,            
    bottom=2mm          
}

\definecolor{PineGreen}{RGB}{0,110,51}
\definecolor{BrickRed}{RGB}{143,20,2}
\definecolor{mycolor}{RGB}{0,150,70}

\usepackage[colorinlistoftodos,bordercolor=orange,backgroundcolor=orange!20,linecolor=orange,textsize=scriptsize]{todonotes}

\newcommand{\cO}{{\cal O}}

\graphicspath{{plots/}}

\usepackage{accents}
\newlength{\dhatheight}

\usepackage{pgfplotstable} 
\pgfplotsset{compat=1.18}
\usetikzlibrary{automata, positioning, arrows, shapes, fit, calc, intersections}
\usepgfplotslibrary{statistics}
\IfFileExists{figure.tex}{\include{figure}}{}

\newcommand\swapifbranches[3]{#1{#3}{#2}}
\makeatletter
\MHInternalSyntaxOn
\patchcmd{\DeclarePairedDelimiter}{\@ifstar}{\swapifbranches\@ifstar}{}{}
\MHInternalSyntaxOff
\makeatother

\DeclarePairedDelimiterX{\inp}[2]{\langle}{\rangle}{#1, #2}
\DeclarePairedDelimiterX{\roundup}[1]{\lceil}{\rceil}{#1}
\DeclarePairedDelimiterX{\rbr}[1]{(}{)}{#1} 

\makeatletter
\newcommand{\mytag}[1]{%
  \refstepcounter{equation}%
  \edef\@currentlabel{\theequation}%
  {({\@currentlabel})}%
  \@bsphack
  \begingroup
    \@onelevel@sanitize\@currentlabelname
    \edef\@currentlabelname{%
      \expandafter\strip@period\@currentlabelname\relax.\relax\@@@%
    }%
    \protected@write\@auxout{}{%
      \string\newlabel{#1}{%
        {\@currentlabel}%
        {\thepage}%
        {\@currentlabelname}%
        {\@currentHref}{}%
      }%
    }%
  \endgroup
  \@esphack
}
\makeatother

\title{\bf Last Iterate Convergence of AdaGrad-Norm\\ for Convex Non-Smooth Optimization}
\author{\begin{tabular}{c}
     {\bf Margarita Preobrazhenskaia \quad
Makar Sidorov \quad
Igor Preobrazhenskii}\\
     {\small Yaroslavl Demidov State University, Russia}
\end{tabular}\\
\begin{tabular}{c}
     {\bf Eduard Gorbunov}\\
     {\small Mohamed Bin Zayed University of Artificial Intelligence, UAE}
\end{tabular}}

\begin{document}

\maketitle

\begin{abstract}
We study the convergence of the last iterate (i.e., the $(N+1)$-th iterate) of the AdaGrad method. Although AdaGrad --- an adaptive subgradient method --- underpins a wide class of algorithms, most existing convergence analyses focus on averaged (or best) iterates. We derive worst-case upper bounds on the suboptimality of the final point and show that, with an optimally tuned stepsize parameter, the last iterate converges at the rate $O(\nicefrac{1}{N^{\nicefrac{1}{4}}})$. We complement this guarantee with matching lower-bound constructions, proving that this rate is tight and that AdaGrad's last-iterate rate is strictly worse than the classical $O(\nicefrac{1}{N^{\nicefrac{1}{2}}})$ rate for its averaged iterate. Technically, our analysis introduces an exponent parameter that captures the growth of the cumulative squared subgradients; combined with the last-iterate inequality of \citet{zamani2025exact}, this reduces the problem to bounding a particular series.
\end{abstract}

\section{Introduction} 

First-order optimization methods are popular for solving large-scale optimization problems arising in various applications, including (but not limited to) Machine Learning \citep{bottou2018optimization}, Signal Processing \citep{eldar2012compressed}, Image Processing \citep{chambolle2011first}, and Optimal Control \citep{bryson1975applied}. The basic example of such a method is a subgradient method \citep{shor1985minimization}, and more recent variants including stochastic \citep{nemirovski2009robust}, adaptive \citep{mcmahan2010adaptive,duchi2011adaptive,kingma2015adam}, and parameter-free \citep{orabona2016training} extensions.

The convergence theory of these methods is well developed across a range of settings, including convex non-smooth optimization, which is the main focus of our paper. However, most available results in this area are stated for \emph{averaged} or \emph{best} iterates. By contrast, last-iterate convergence --- despite being the default choice in many implementations --- has been analyzed only in a small number of works \citep{shamir2013stochastic,jain2021making,zamani2025exact,kornowski2025gradient,zamani2024exact,defazio2023optimal,liu2025online,liu2023revisiting}. In particular, a tight worst-case characterization for the last iterate of the standard AdaGrad method \emph{without stepsize modifications} has been missing.

\paragraph{Contributions.} We analyze AdaGrad with the standard scalar stepsizes and a horizon-dependent scaling of the base stepsize. For convex problems with bounded subgradients, we establish a worst-case convergence bound of order $O(\nicefrac{1}{N^{\nicefrac{1}{4}}})$ for the last iterate under an optimal choice of the stepsize parameter (see Theorem~\ref{th_main} and Corollary~\ref{cor:main}). Our proof takes an asymptotic viewpoint and introduces a key parameter $\delta$ that captures the growth exponent of the cumulative squared subgradients over $N$ iterations. Combined with a last-iterate inequality for subgradient methods due to \citet{zamani2025exact}, this reduces the analysis to estimating the sum
$$\sum_{k=1}^{N-1}\frac{y_k}{(N-k)(1+\sum_{i=1}^ky_i)}$$ 
for $0<y_i\leqslant 1$, $1+\sum_{i=1}^{N-1}y_i= N^{2\delta}$, $\delta\in[0,\frac12]$. Moreover, we provide matching lower-bound constructions (see Theorem~\ref{thm:lower_bound}), showing that the $O(\nicefrac{1}{N^{\nicefrac{1}{4}}})$ rate \emph{cannot be improved} for the last iterate of standard AdaGrad; in particular, it is strictly slower than $O(\nicefrac{1}{N^{\nicefrac{1}{2}}})$ averaged-iterate rate for AdaGrad \citep{duchi2011adaptive}.

\paragraph{Structure of the paper.} Section~\ref{sec_prelim} contains definitions, assumptions and the problem statement. In Section~\ref{sec_related}, we discuss closely related works. Section~\ref{sec_main} contains the formulation and discussion of the main results. Section~\ref{sec_exper} describes the numerical experiments. Section~\ref{ref_concl} presents the conclusions and describes further research directions. The detailed proofs of the main results are deferred to Appendices~\ref{sec_app_A} and \ref{appendix:lower_bound}.

\section{Preliminaries}\label{sec_prelim} 

\paragraph{Notation.} Let $\langle\cdot,\cdot\rangle$ be the standard inner product in $\mathbb{R}^d$, and let $\|\cdot\|$ be the induced Euclidean norm. The subdifferential of a function $f:\R^d \to \R$ at a point $x$ is denoted by $\partial f (x) := \{g\in\mathbb{R}^d:\ \forall y \in\mathbb{R}^d\  f(y)\geqslant f(x)+\langle g,y-x\rangle\}$. We also use $P_X(y)$ to denote the Euclidean projection onto a closed convex set $X \subseteq \R^d$: $P_X(y):=\argmin_{x\in X}\|x-y\|$.

\paragraph{Problem setup.} We consider a classical convex optimization problem
\begin{equation}\label{eq:problem}
    \min_{x\in X}f(x),
\end{equation}
where $X\subset\mathbb{R}^d$ is a closed convex set, $f:X\to\mathbb{R}$ is the objective function. We make the following standard assumptions.

\begin{assumption}[Convexity and existence of the minimizer]\label{as:convexity}
    Function $f:X \to \R$ is convex and $x^* \in \argmin_{x\in X}f(x)$ exists.
\end{assumption}
\begin{assumption}[Bounded subgradients]\label{as:bounded_subgrad}
    Function $f:X \to \R$ has $G$-bounded subgradients: $\forall x, g$ such that $g\in \partial f(x)$ we have $\|g\| \leq G$.
    \begin{equation}
    \label{neq_G}
    \|g\|\leqslant G,\quad \forall g \in \partial f(x), x\in X.
    \end{equation}
\end{assumption}

\paragraph{Considered method.} A standard approach for solving problem \eqref{eq:problem} is the projected subgradient method, summarized in Algorithm~\ref{alg:proj}.

\begin{algorithm}[H]
    \caption{Projected subgradient method with generic stepsizes}
    \label{alg:proj}   
    \begin{algorithmic}[1]
        \REQUIRE number of iterations $N$, sequence of positive stepsizes $\{h_k\}_{1\leqslant k\leqslant N}$, convex set $X$, convex function $f$ defined on $X$, initial iterate $x^1\in X$
        \FOR{$k=1,\ldots, N$}
        \STATE Receive a subgradient $g^k \in \partial f(x^k)$
        \STATE Compute $x^{k+1} = P_X(x^k - h_kg^k)$
        \ENDFOR
        \ENSURE last iterate $x^{N+1}$
    \end{algorithmic}
\end{algorithm}

\noindent AdaGrad\footnote{The original AdaGrad \citep{duchi2011adaptive,mcmahan2010adaptive} uses coordinate-wise stepsizes, whereas the scalar variant in \eqref{h_k} is often called AdaGrad-Norm \citep{ward2020adagrad}. In this paper, we restrict attention to scalar stepsizes and refer to the instance of Algorithm~\ref{alg:proj} with stepsizes \eqref{h_k} simply as AdaGrad.} can be seen as a special case of Algorithm~\ref{alg:proj} with
\begin{equation}
    \label{h_k}
    h_k=\frac{h}{\sqrt{G^2+\sum\limits_{i=1}^{k}\|g^i\|^2}}\quad\text{for}\quad k=1,\ldots,N \text{ and some } h > 0.
\end{equation}
We assume that the starting point $x^1$ of the algorithm satisfies
\begin{equation}\label{neq_R}
    \|x^1-x^*\|\leqslant R.
\end{equation}
If $h > 0$ is fixed, then the last iterate of AdaGrad is not guaranteed to converge, as we illustrate in the following proposition.
\begin{proposition}\label{prop:bad_example}
    There exists problem \eqref{eq:problem} satisfying Assumptions~\ref{as:convexity} and \ref{as:bounded_subgrad} such that for any $N \geq 1$, $h > 0$ Algorithm~\ref{alg:proj} with stepsizes defined in \eqref{h_k} satisfies:
    \begin{equation}
        f(x^{N+1}) - f(x^*) \geq \frac{Gh}{\sqrt{2}}. \label{eq:lower_bound_1}
    \end{equation}
\end{proposition}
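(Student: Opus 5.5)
We construct a one-dimensional example, $X=\R$ and $f(x)=G|x|$, so that $x^*=0$, $f(x^*)=0$, and every subgradient has norm at most $G$. Assumptions~\ref{as:convexity} and~\ref{as:bounded_subgrad} hold trivially. The idea is to start AdaGrad exactly at the minimizer, $x^1=0$ (so \eqref{neq_R} holds with any $R\geq 0$), and choose a bad subgradient oracle. At $x=0$ we have $\partial f(0)=[-G,G]$, and we let the oracle return $g^1=G$. The method is then pushed out of the minimizer, and we show that from that point on it can never land back at $0$ with small enough error. The statement is for any $N$ and $h$, and our example does not depend on $h$, so this choice is legitimate.

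The key step is an explicit computation. At every iterate $x^k\neq 0$ the subgradient is forced, $g^k=G\,\mathrm{sign}(x^k)$, and at $x^k=0$ we choose $g^k=G$. Hence $\|g^i\|=G$ for all $i$, and the stepsizes are deterministic:
\[
h_k=\frac{h}{G\sqrt{k+1}} .
\]
Each step therefore has length $h_kG=h/\sqrt{k+1}$, a decreasing sequence. Starting from $x^1=0$, the first step gives $x^2=-h/\sqrt2$. After that the iterate moves toward $0$ by steps of decreasing length, overshooting and oscillating around $0$.

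We claim the invariant $|x^{k+1}|\ge h/\sqrt{k+1}\cdot\tfrac12$, or something of that kind. That would only give $f(x^{N+1})\gtrsim Gh/(2\sqrt{N+1})$, which is weaker than the required $Gh/\sqrt2$ for every $N$. So this naive invariant is not enough, and we need a better construction: we want $|x^{N+1}|\ge h/\sqrt2$ uniformly in $N$.

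The better construction uses the freedom to pick $x^1$ and the sign convention at kinks, and the natural candidate is $x^1$ far away. With $x^1=-D$ for $D$ huge, the iterate walks monotonically toward $0$ with steps $h/\sqrt{k+1}$, and $|x^{k+1}|=D-h\sum_{i=2}^{k+1} i^{-1/2}$ stays at least $h/\sqrt2$ until roughly $N\approx D^2/(4h^2)$ steps. But the example must work for all $N$ and all $h$ simultaneously, so a fixed $D$ fails.

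Instead we use an exactly periodic orbit. Take $f(x)=G|x|$ and $x^1=h/(2\sqrt2)$, so that the first step has length $h/\sqrt2$ and $x^2=-h/(2\sqrt2)$. Subsequent steps are shorter, though, so the orbit is not exactly periodic either. The main obstacle is therefore to make the bound uniform in both $N$ and $h$.

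The plan is to exploit scaling. Since $h$ is arbitrary, the example is allowed to depend on $h$, because the problem only needs to exist for each $h$, and we read the quantifiers in the statement that way. If the example must be independent of $h$, we would instead rescale via $x^1=0$ and the constraint set $X$. Concretely, take $X=[0,\infty)$ and $f(x)=G\,|x-c|$ arranged so that the projection keeps the iterate stuck at a kink at distance at least $h/\sqrt2$. A natural choice is $f(x)=-Gx$ on $X=(-\infty,0]$, with $x^*=0$, since then $x^*$ is on the boundary. With $x^1=0$, $g^1=-G$, the step pushes to $h/\sqrt2>0$, and the projection returns $x^2=0$, so the iterate stays optimal. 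This does not help, which again shows that the construction of the exact instance is the hardest step. We would try an $h$-dependent starting point, for instance $x^1=-D$ with $D\to\infty$ chosen per $N$, and verify the distance bound $|x^{N+1}|\geq h/\sqrt2$ by a direct summation.
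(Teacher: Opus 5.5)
Your proposal does not reach a proof. Each construction you try (the invariant, the far start with fixed $D$, the near-periodic orbit, the projection onto a half-line) is abandoned, and the last suggestion is only sketched.

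The root cause is the oracle you commit to at the kink: $g^k=G$ whenever $x^k=0$. With $\|g^k\|=G$ for every $k$, the stepsizes are forced to be $h_k=h/(G\sqrt{k+1})$. An easy induction (using $h/\sqrt{k}\leqslant 2h/\sqrt{k+1}$) then shows that once an iterate reaches or crosses $0$, it stays trapped in $|x^{k+1}|\leqslant h/\sqrt{k+1}$. The step lengths are not summable, so this happens from every fixed starting point. Along your trajectories, therefore, $f(x^{N+1})-f(x^*)\leqslant Gh/\sqrt{N+1}$ for all large $N$: the last iterate converges, and no uniform bound $Gh/\sqrt2$ can come out of this oracle.

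Your final fallback, $x^1=-D$ with $D$ chosen per $N$ (and per $h$), only works by making $\|x^1-x^*\|\approx 2h\sqrt N$. That contradicts the quantifier order you correctly identified a paragraph earlier: the problem is fixed before $N$ and $h$. The same objection applies to your remark that the example may depend on $h$. It also empties the proposition of content. With $R\approx 2h\sqrt N$, the bound $Gh/\sqrt2\approx GR/(2\sqrt{2N})$ is perfectly consistent with convergence, whereas the proposition is meant to show non-convergence for fixed $h$ and fixed $R$.

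The missing idea is to use the non-uniqueness of the subgradient at the kink the other way round. Your setup ($f(x)=G|x|$, $x^1=0=x^*$, adversarial selection from $\partial f(0)=[-G,G]$) is exactly the paper's; what must change is \emph{which} elements are returned.

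\begin{itemize}
\item Let the oracle return $g^k=0$ for $k=1,\dots,N-1$. The iterate then stays at $x^1$ and, crucially, the AdaGrad accumulator stays at $G^2$.
\item Return $g^N=G$ at the last step. This gives $h_N=h/\sqrt{G^2+G^2}=h/(\sqrt2\,G)$, hence $x^{N+1}=x^1-h/\sqrt2$.
\item Therefore $f(x^{N+1})-f(x^*)=Gh/\sqrt2$ exactly, for every $N\geqslant1$ and $h>0$, with $f$ and $x^1$ fixed (indeed $\|x^1-x^*\|=0$).
\end{itemize}

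The only dependence on $N$ is in which element of $\partial f(x^k)$ the oracle returns, and Algorithm~\ref{alg:proj} leaves that free. In short, the push away from the minimizer must be delayed to the final iteration and preceded by zero subgradients, because $h_N$ is largest precisely when $\sum_{i<N}\|g^i\|^2=0$.
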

\begin{proof}
    Let $X = \R$ and $f(x) = G|x - x^1|$. Since subgradient $g^k$ can be selected arbitrarily and $\partial f(x^1) = [-G,G]$, consider the following choice: $g^k = 0$ for $k=1,\ldots, N-1$ and $g^{N} = G$. Then, we have: $x^k = x^1$ for $k=1,\ldots,N$ and
    \begin{equation*}
        x^{N+1} = x^1 - \frac{hG}{\sqrt{G^2 + G^2}} = x^1 - \frac{h}{\sqrt{2}} \Longrightarrow f(x^{N+1}) - f(x^*) = G|x^{N+1} - x^1| = \frac{Gh}{\sqrt{2}}.
    \end{equation*}
\end{proof}

Proposition~\ref{prop:bad_example} suggests that $h$ should either decrease with $k$ (e.g., as in \citet{liu2025online}) or depend explicitly on the horizon $N$. Therefore, we consider the choice $h = \nicefrac{R}{N^{\gamma}}$ for some $\gamma > 0$. Note that any fixed stepsize $h$ can be written as $h = \nicefrac{R}{N^{\gamma}}$ for a fixed natural $N$ and suitable $R > 0, \gamma > 0$.

\section{Related Works}\label{sec_related}

The literature on AdaGrad-type methods and, more broadly, first-order optimization methods is vast. We therefore focus on results most closely related to our setting: convex (possibly non-smooth) minimization of a Lipschitz function via AdaGrad-type methods, and the convergence of the \emph{last iterate} for the first-order methods.

\paragraph{Averaged/best-iterate guarantees for AdaGrad-type methods.} AdaGrad was introduced in online learning and stochastic optimization \citep{mcmahan2010adaptive,duchi2011adaptive}. The original analyses give regret bounds which, via standard online-to-batch conversion, translate into $\cO(\nicefrac{1}{\sqrt{N}})$ guarantees for the best/weighted-average iterate on convex Lipschitz objectives.
A large body of work extends these results in different directions (acceleration, parameter-free tuning, and richer preconditioners), see, e.g.,  \citep{gupta2017unified,mohri2016accelerating,levy2018online,orabona2015scale,cutkosky2018black,cutkosky2020parameter,cutkosky2019matrix,cutkosky2020better,cutkosky2020adaptive,wan2018efficient,feinberg2023sketchy,defazio2022momentumized}. Since our contribution concerns last-iterate guarantees, we do not survey this line in detail.

\paragraph{Last-iterate convergence for (stochastic) subgradient methods.} For general convex Lipschitz objectives, \citet{shamir2013stochastic} proved that projected SGD (and, as a special case, the deterministic projected subgradient method) with stepsizes $\eta_k \propto \nicefrac{1}{\sqrt{k}}$ achieves an expected last-iterate accuracy of order $\cO(\nicefrac{\log(N)}{\sqrt{N}})$ on bounded domains. \citet{harvey2019tight} showed that this logarithmic factor is unavoidable for this stepsize schedule by constructing matching lower bounds, and also derived high-probability analogues for SGD.
To obtain the information-theoretically optimal $\Theta(\nicefrac{1}{\sqrt{N}})$ last-iterate rate, \citet{jain2021making} proposed a horizon-dependent stepsize schedule (piecewise-constant, with a ``doubling'' structure\footnote{More precisely, \citet{jain2021making} propose the following stepsize schedule: $\eta_k \propto \nicefrac{2^{-i}}{\sqrt{N}}$, where $k \in (N_i, N_{i+1}]$ and $N_i$ are such that $0 =: N_0 < N_1 < \ldots < N_k = N-1 < N_{k+1} := N$, $k$ is the smallest integer such that $N\cdot 2^{-i} \leq 1$, and $N_i := N - \lceil N\cdot 2^{-i} \rceil$ for $i = 0,1, \ldots, k$.}) and proved both in-expectation and high-probability last-iterate bounds under bounded-domain assumptions.

\paragraph{Sharp deterministic rates and impossibility of anytime optimal schedules.}
\citet{zamani2025exact} developed a refined proof technique for the deterministic projected subgradient method based on tracking the distance to a \emph{time-varying} reference point. Using this approach, they obtained exact worst-case last-iterate bounds (with matching lower bounds including constants) for constant stepsizes and constant step lengths; optimizing the constant stepsize yields a rate of order $\cO(\nicefrac{\sqrt{\log(N)}}{\sqrt{N}})$. They further proposed a horizon-dependent ``optimal'' subgradient method with linearly decaying stepsizes $\eta_k = \nicefrac{R(N+1-k)}{G\sqrt{(N+1)^3}}$, achieving the minimax rate $\nicefrac{GR}{\sqrt{N}}$ and matching the lower bound of \citet{drori2016optimal}. They also showed that some dependence on the horizon is unavoidable: there is no universal stepsize sequence that achieves the optimal last-iterate bound at \emph{every} iteration.
\citet{kornowski2025gradient} strengthened this impossibility result by proving that any horizon-independent stepsize sequence must incur a polylogarithmic penalty, namely a lower bound of order $\Omega(\nicefrac{\log^{1/8}(N)}{\sqrt{N}})$.
Related work also studied other adaptive choices of stepsizes. In particular, \citet{zamani2024exact} analyzed Polyak stepsizes \citep{polyak1969minimization}: they proved that the standard Polyak rule $\eta_k = \nicefrac{(f(x^k) - f(x^*))}{\|g^k\|^2}$ yields a worst-case last-iterate rate $\cO(N^{-1/4})$ and constructed instances showing this is tight; they also introduced a modified, horizon-dependent Polyak rule $\eta_k = \nicefrac{(N+1-k)(f(x^k) - f(x^*))}{(N+1)\|g^k\|^2}$ which recovers the optimal $\cO(\nicefrac{1}{\sqrt{N}})$ rate.

\paragraph{From regret to last-iterate and implications for AdaGrad.}
A complementary viewpoint is to reduce last-iterate guarantees to online learning regret bounds. \citet{defazio2023optimal} established a general regret-to-last-iterate conversion which recovers (and simplifies) the analysis of horizon-dependent linear-decay schedules for subgradient methods, including the optimal schedule of \citet{zamani2025exact}.
This framework has recently been utilized by \citet{liu2025online} to obtain last-iterate guarantees in online convex optimization under heavy-tailed feedback for online gradient descent, dual averaging \citep{nesterov2009primal,xiao2009dual}, and AdaGrad. In particular, their AdaGrad last-iterate bound applies to a \emph{horizon-dependent, linearly decayed} variant of the stepsize (multiplying the standard AdaGrad step by $\nicefrac{(N-k)}{N}$), whereas our analysis studies the \emph{standard} AdaGrad stepsizes without a linearly decaying stepsize schedule.

Finally, \citet{liu2023revisiting} extended and refined the approach of \citet{zamani2025exact} to stochastic gradient methods, providing both in-expectation and high-probability last-iterate bounds under broader assumptions.

\section{Main Results}\label{sec_main} 

\paragraph{Key technical novelty.} A key technical ingredient of our analysis is an exponent parameter $\delta \geq 0$ defined as
    \begin{equation}
    \label{sim}
    \delta := \frac{1}{2}\log_N\left(1+\sum\limits_{i=1}^{N-1}\frac{\|g^i\|^2}{G^2}\right)\Longleftrightarrow G^2+\sum\limits_{i=1}^{N-1}\|g^i\|^2= G^2N^{2\delta}.
    \end{equation}
Under Assumption~\ref{as:bounded_subgrad}, we have $G^2\leqslant G^2+\sum_{i=1}^{N-1}\|g^i\|^2\leqslant G^2N$, so $\delta \in [0, \nicefrac{1}{2}]$. Since $\delta$ depends on the entire trajectory of the method, it is not known a priori and is used only as an analytical device. This parameterization allows us to derive tight upper bounds for terms that naturally arise in the proof and capture the interaction between the adaptive stepsizes $h_k$ and the subgradient norms $\|g^k\|$.

We now state our main upper bound on $f(x^{N+1}) - f(x^*)$.

\begin{theorem}
    \label{th_main}
    Let Assumptions~\ref{as:convexity} and \ref{as:bounded_subgrad} be satisfied. Assume that Algorithm~\ref{alg:proj} with stepsize \eqref{h_k} is run for $N$ iterations and $h = \nicefrac{R}{N^{\gamma}}$ for some $\gamma \in (0, \nicefrac{1}{2}]$. Then, the last iterate satisfies
    \begin{multline}
        \label{neq_Zamani_3}
        f(x^{N+1})-f(x^*)\leqslant 
        \frac{GR}{2}\Big(\frac{{N^{\gamma}}{\sqrt{N^{2\delta}+1}}}{2N+1}+
        \frac{N^{-\gamma}\left(4\log (N^{2\delta})+5\right)\sqrt{N^{2\delta}+1}}{2\max\{N^{2\delta}-1, 1\}}+\\N^{-\gamma-2\delta}\sqrt{N^{2\delta}+1}+N^{-\gamma-\delta}\Big), 
    \end{multline}
    where constant $\delta$ is defined in \eqref{sim}.
\end{theorem}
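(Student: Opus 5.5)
The plan is to start from the last-iterate inequality of \citet{zamani2025exact} for the projected subgradient method with arbitrary positive stepsizes. This inequality compares $x^{N+1}$ with a time-varying reference point $z^k$ (a convex combination of $x^*$ and the iterates, with weights depending on $k$). Specialised to Algorithm~\ref{alg:proj}, it has the schematic form
\[
f(x^{N+1})-f(x^*)\lesssim \frac{R^2}{2\sum_k h_k}+\frac{1}{2}\sum_{k=1}^{N}\frac{h_k^2\|g^k\|^2}{\text{(remaining horizon weight)}},
\]
where the second sum carries a weight proportional to $1/(N-k+1)$ relative to the stepsizes. I would first write the precise inequality with general $h_k$ and then substitute \eqref{h_k} with $h=R/N^{\gamma}$.

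Write $y_i=\|g^i\|^2/G^2\in[0,1]$ and $S_k=1+\sum_{i\le k}y_i$. Then $h_k=R N^{-\gamma}/(G\sqrt{S_k})$, and by \eqref{sim} $S_{N-1}=N^{2\delta}$. The resulting terms are handled as follows.

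\textbf{Distance term.} Every $h_k\ge RN^{-\gamma}/(G\sqrt{S_N})$ and $S_N\le N^{2\delta}+1$, so the term $R^2/\sum h_k$ becomes $GRN^{\gamma}\sqrt{N^{2\delta}+1}/(\Theta(N))$. This gives the first summand with denominator $2N+1$; I expect that exact constant to come from the weights in Zamani--Glineur's inequality.

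\textbf{Final step.} The $k=N$ contribution, with $h_N^2\|g^N\|^2\le R^2N^{-2\gamma}/S_N$, together with the normalisation by $\sum h_k$, should produce the $N^{-\gamma-\delta}$ and $N^{-\gamma-2\delta}\sqrt{N^{2\delta}+1}$ terms.

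\textbf{Middle terms.} What remains is a multiple of
\[
\frac{R N^{-\gamma}\sqrt{N^{2\delta}+1}}{G}\cdot G^2\sum_{k=1}^{N-1}\frac{y_k}{(N-k)S_k}.
\]
This is exactly the series announced in the introduction, so the key lemma is
\[
\sum_{k=1}^{N-1}\frac{y_k}{(N-k)S_k}\le \frac{4\log(N^{2\delta})+5}{2\max\{N^{2\delta}-1,1\}}.
\]
This lemma is the main obstacle. The constraints are $0\le y_i\le1$ and $S_{N-1}=N^{2\delta}$.

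To prove it, I would split the index range at the point $m$ where $N-k\approx S_{N-1}$.

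For $k\le N-S_{N-1}$ we have $N-k\ge S_{N-1}-1$, so $1/(N-k)\le 1/(S_{N-1}-1)$. The tail then satisfies $\sum y_k/S_k\le \log S_{N-1}$, by comparison with $\int dS/S$ and the bound $S_k\ge S_{k-1}$ with $y_k\le 1$. This gives the $\log(N^{2\delta})/(N^{2\delta}-1)$ part.

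For the last $\approx S_{N-1}$ indices, $y_k\le 1$ and $S_k$ can decrease by at most $1$ per step going backwards. Hence $S_k\ge \max\{1,S_{N-1}-(N-1-k)\}$, and the sum is dominated by $\sum_j \frac{1}{j\,\max\{1,S_{N-1}-j\}}$. Using partial fractions,
\[
\frac{1}{j(S-j)}=\frac{1}{S}\Bigl(\frac1j+\frac{1}{S-j}\Bigr),
\]
this is $O(\log S/S)$. Careful bookkeeping should yield the constants $4$ and $5$. If the worst case over $y$ is not attained by this extremal configuration, I would argue by a rearrangement/exchange argument that moving mass of $y$ toward later indices only increases the sum, which reduces to the extremal profile treated above.

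The trivial case $\delta=0$ (all $y_i=0$), where the sum vanishes, explains the $\max\{\cdot,1\}$ in the denominator.
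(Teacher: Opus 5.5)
\paragraph{Gap: the constant in your key lemma.} Your reduction is the paper's: the Zamani--Glineur inequality, then the series $\sum_{k=1}^{N-1}\frac{y_k}{(N-k)S_k}$. As written, though, the constants do not close at the key lemma. Write $S=N^{2\delta}$. You require $\sum_{k=1}^{N-1}\frac{y_k}{(N-k)S_k}\le\frac{4\log S+5}{2\max\{S-1,1\}}$, whose leading term is $\frac{2\log S}{S}$. Your split gives:
\begin{itemize}
\item the early block at most $\frac{\log S}{S-1}$;
\item the late block at most $\sum_j\frac{1}{j(S-j)}\approx\frac{2\log S}{S}$.
\end{itemize}
Together this is $\frac{3\log S+O(1)}{S}$, which exceeds your target for large $S$; at $S=10^3$ it is about $0.022$ against $0.016$.

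Tightening the two estimates separately will not help, because they cannot be near-tight at the same time. The early one needs $S_k\approx S$ by the end of the early block, and that leaves the late block almost no mass. Reaching $2\log S$ would need a joint argument.

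Your fallback ``moving mass later only increases the sum'' is false as a local principle. Take $y_1=\dots=y_{k-1}=0$, $y_k=a$, $y_{k+1}=b<1$, and shift $\varepsilon$ of mass from $k$ to $k+1$. The sum changes by $\varepsilon\bigl(\frac{1}{(N-k-1)(1+a+b)}-\frac{1}{(N-k)(1+a)^2}\bigr)+o(\varepsilon)$. This is negative, for example, at $a=0.01$, $b=0.9$, $N-k=100$.

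\paragraph{Where the missing factor comes from.} The factor $\frac12$ you are loading onto the lemma comes from the Zamani--Glineur step you left schematic. Take $v_{N+1}=1$ and $v_k=h_{N+1}/\sum_{i=k+1}^{N+1}h_iv_i$ for $k=0,\dots,N$. Then $v_N=1$, $c_k=0$ for $k\le N$, $c_{N+1}=h_{N+1}$, and
\[
\frac{1}{v_k^2}=1+\frac{2}{h_{N+1}}\sum_{i=k+1}^{N}h_i+\frac{1}{h_{N+1}^2}\sum_{i=k+1}^{N}h_i^2v_i^2\ge 2(N-k)+1,
\]
provided $h_i\ge h_{N+1}$. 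Since $h_{N+1}$ is not produced by the algorithm, you enforce this by defining $h_{N+1}:=h_N$. Hence $v_k^2\le\frac{1}{2(N-k)}$, and the series is multiplied by $\frac{h^2}{4h_{N+1}}\le\frac{GR}{4}N^{-\gamma}\sqrt{N^{2\delta}+1}$. Your version of the lemma instead presupposes the prefactor $\frac{GR}{2}N^{-\gamma}\sqrt{N^{2\delta}+1}$.

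With the correct prefactor you only need $\frac{4\log S+5}{\max\{S-1,1\}}$, and your split does give this. It yields at most $\frac{3\log S+1}{S-1}$ for $S>2$, and at most $\log S\le\log 2$ for $S\le 2$.

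The same choice of $v_k$ also supplies the remaining terms:
\begin{itemize}
\item $v_0^2\le\frac{1}{2N+1}$ for the first term;
\item the $k=N$ term $\frac{G^2h_N^2}{2h_{N+1}}$, which gives $N^{-\gamma-2\delta}\sqrt{N^{2\delta}+1}$;
\item an extra $(N+1)$-st term $\frac12 h_{N+1}\|g^{N+1}\|^2\le\frac{GR}{2}N^{-\gamma-\delta}$. Your schematic sum, which stops at $k=N$, misses this one.
\end{itemize}
With these corrections, your time split at $N-k\approx S-1$ is a legitimate alternative to the paper's layering by level sets $\{k:j<S_k\le j+1\}$.
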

\begin{proof}
    The proof of this theorem is given in Appendix~\ref{sec_app_A}.
\end{proof}

Note that for all natural $N$, 
$$\sqrt{N^{2\delta}+1}\leqslant\sqrt2 N^\delta,\quad \dfrac{1}{2N+1}\leqslant\dfrac{1}{2N},\quad \frac{1}{\max\{N^{2\delta}-1,1\}}\leqslant2N^{-2\delta}.
$$
Therefore, the following corollary holds.
\begin{corollary} 
    Under the conditions of Theorem \ref{th_main}, the following bound holds:
    $$f(x^{N+1}) - f(x^*) \leqslant C_1 N^{\gamma+\delta-1} + C_2 N^{-\gamma-\delta} \log(N^{2\delta}) + C_3 N^{-\gamma-\delta},$$
    where 
    $C_1=\frac{\sqrt{2}}{4}GR, C_2=2\sqrt{2}GR, C_3=\frac{1}{2}GR(6\sqrt{2}+1).$
\end{corollary}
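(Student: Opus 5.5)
The plan is to substitute the three elementary inequalities listed just before the corollary into each of the four terms of \eqref{neq_Zamani_3}, bound each term separately, and then group the results by power of $N$. Since $\delta\in[0,\frac12]$ and $N\geqslant 1$, all the quantities involved are nonnegative, so every replacement can only increase the right-hand side.

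The terms are handled as follows.

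\textbf{First term.} Using $\sqrt{N^{2\delta}+1}\leqslant\sqrt2N^\delta$ and $\frac{1}{2N+1}\leqslant\frac1{2N}$ gives
$$\frac{GR}{2}\cdot\frac{N^\gamma\sqrt2 N^\delta}{2N}=\frac{\sqrt2}{4}GR\,N^{\gamma+\delta-1}.$$
This produces the coefficient $C_1$.

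\textbf{Second term.} Using $\frac{1}{\max\{N^{2\delta}-1,1\}}\leqslant 2N^{-2\delta}$ together with the same square-root bound gives
$$\frac{GR}{2}\cdot\frac{N^{-\gamma}(4\log(N^{2\delta})+5)\sqrt2N^\delta\cdot 2N^{-2\delta}}{2}=\frac{\sqrt2}{2}GR(4\log(N^{2\delta})+5)N^{-\gamma-\delta}.$$
This splits into $2\sqrt2\,GR\,N^{-\gamma-\delta}\log(N^{2\delta})$, which gives $C_2$, and $\frac{5\sqrt2}{2}GR\,N^{-\gamma-\delta}$, which goes into the $C_3$ term.

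\textbf{Third term.} Here $N^{-\gamma-2\delta}\sqrt{N^{2\delta}+1}\leqslant\sqrt2N^{-\gamma-\delta}$, so this term contributes $\frac{\sqrt2}{2}GR\,N^{-\gamma-\delta}$.

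\textbf{Fourth term.} This contributes $\frac12GR\,N^{-\gamma-\delta}$ directly.

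Summing the coefficients of $N^{-\gamma-\delta}$ gives
$$\frac{GR}{2}\bigl(5\sqrt2+\sqrt2+1\bigr)=\frac{GR}{2}(6\sqrt2+1)=C_3.$$

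The only step that needs checking is the third inequality, $\frac{1}{\max\{N^{2\delta}-1,1\}}\leqslant 2N^{-2\delta}$. I would verify it by cases.
\begin{itemize}
\item If $N^{2\delta}\geqslant2$, then $N^{2\delta}-1\geqslant N^{2\delta}/2$, so the left side is at most $2N^{-2\delta}$.
\item If $N^{2\delta}<2$, the maximum equals $1$, and $1<2N^{-2\delta}$ holds because $N^{2\delta}<2$.
\end{itemize}
The other two inequalities are immediate. Everything else is routine bookkeeping of constants, so I expect no real obstacle; the main risk is an arithmetic slip in collecting the coefficient $C_3$.
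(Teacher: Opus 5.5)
Your proposal is correct and is the paper's argument: substitute $\sqrt{N^{2\delta}+1}\leqslant\sqrt2N^\delta$, $\frac{1}{2N+1}\leqslant\frac{1}{2N}$ and $\frac{1}{\max\{N^{2\delta}-1,1\}}\leqslant 2N^{-2\delta}$ into the four terms of \eqref{neq_Zamani_3}, then collect powers of $N$. Your constants, including $C_3=\frac{GR}{2}(5\sqrt2+\sqrt2+1)$, check out, and your case verification of the third inequality is valid; the paper only asserts these inequalities without proof.
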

For $0<\gamma+\delta\leqslant\frac12$, the leading term is $C_2 N^{-\gamma-\delta} \log(N^{2\delta})$, whereas
for $\frac12<\gamma+\delta\leqslant1$, the dominant term is $C_1 N^{\gamma+\delta-1}$. This yields the following consequence.

\begin{corollary}\label{cor:main} 
    Under the conditions of Theorem \ref{th_main}, the following bound holds
    $$f(x^{N+1}) - f(x^*) =
        \begin{cases}
        O\left( GRN^{-\gamma-\delta} (1+\log(N^{2\delta}))\right),&0<\gamma+\delta\leqslant\frac12,\\
        O\left(GRN^{\gamma+\delta-1}\right),&\frac12<\gamma+\delta\leqslant1.
        \end{cases}
    $$
    In particular, for any choice of $\gamma \in (0,\nicefrac{1}{2}]$ there exists problem \eqref{eq:problem} satisfying Assumptions~\ref{as:convexity} and \ref{as:bounded_subgrad} such that \eqref{sim} holds with $\delta = \nicefrac{\lfloor \gamma + \nicefrac{3}{4} \rfloor}{2}$, implying that
    \begin{equation}
        f(x^{N+1}) - f(x^*) = O\left(\max\left\{\frac{GR}{N^{\gamma}}, \frac{GR}{N^{\frac{1}{2}- \gamma}}\right\}\right), \label{eq:worst_case_bound}
    \end{equation}
    where we used that $\delta = 0 \Longrightarrow \log(N^{2\delta}) = 0$ for $\gamma \in (0,\nicefrac{1}{4}]$. The upper bound above is minimized at $\gamma = \nicefrac{1}{4}$: in this case, $f(x^{N+1}) - f(x^*) = O(\nicefrac{GR}{N^{\nicefrac{1}{4}}})$.
\end{corollary}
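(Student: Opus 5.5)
The first part is a direct consequence of the preceding corollary, so I will obtain it by comparing the three terms $C_1 N^{\gamma+\delta-1}$, $C_2 N^{-\gamma-\delta}\log(N^{2\delta})$ and $C_3 N^{-\gamma-\delta}$. Set $s=\gamma+\delta\in(0,1]$. If $s\leqslant\frac12$, then $\gamma+\delta-1\leqslant-\gamma-\delta$, so $N^{s-1}\leqslant N^{-s}$ and the first term is absorbed into $GRN^{-s}$. The remaining two terms together give $O(GRN^{-s}(1+\log(N^{2\delta})))$. If $s>\frac12$, then $N^{-s}<N^{s-1}$. For the logarithmic term, $\log(N^{2\delta})\leqslant\log N$, and $N^{-s}\log N/N^{s-1}=N^{1-2s}\log N$. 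This ratio is bounded by a constant depending only on $s-\frac12$, so the claim should be read with constants depending on $(\gamma,\delta)$. Alternatively, I would note that $\delta$ ranges over a fixed set in the application and present the bound that way. This is the only slightly delicate point in the first part, and I would state the dependence of the constant on $2s-1$ explicitly.

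For the ``in particular'' part, I would realize both values of $\delta$ with explicit instances of \eqref{eq:problem}. If $\gamma\in(0,\frac14)$, then $\lfloor\gamma+\frac34\rfloor=0$, so I need $\delta=0$, i.e.\ $g^i=0$ for $i\leqslant N-1$. The instance $f(x)=G|x-x^1|$ from Proposition~\ref{prop:bad_example}, with the zero subgradient chosen at $x^1$, achieves this. If $\gamma\in[\frac14,\frac12]$, I need $\delta=\frac12$, i.e.\ $\|g^i\|=G$ for all $i\leqslant N-1$. A linear function $f(x)=G\langle e,x\rangle$ on a suitable closed convex set works, for example a half-line $X=[a,\infty)\subset\R$ with $x^*=a$ and $x^1$ far enough from $a$ that the iterates stay in the interior for $N-1$ steps. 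Then every subgradient has norm $G$, so $G^2+\sum_{i=1}^{N-1}\|g^i\|^2=G^2N$ and $\delta=\frac12$. The required distance is $\sum_k h_k\leqslant hN^{1/2}\cdot 2$ up to constants, and I would check that it is compatible with \eqref{neq_R} by choosing $R$ appropriately. I only need existence of problems attaining the stated $\delta$, and the upper bound already holds for every problem, so these constructions suffice.

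Finally, I would substitute each value of $\delta$ into the first part. For $\delta=0$ and $\gamma<\frac14$ we have $s=\gamma\leqslant\frac12$ and $\log(N^{0})=0$, which gives $O(GRN^{-\gamma})$. For $\delta=\frac12$ we have $s=\gamma+\frac12>\frac12$, which gives $O(GRN^{\gamma-\frac12})=O(GR/N^{\frac12-\gamma})$. Both cases are covered by the maximum in \eqref{eq:worst_case_bound}. The function $\max\{N^{-\gamma},N^{\gamma-\frac12}\}$ is decreasing in $\gamma$ for $\gamma<\frac14$ and increasing for $\gamma>\frac14$, so it is minimized where the exponents coincide, $\gamma=\frac14$, with value $N^{-1/4}$. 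The boundary case $\gamma=\frac14$ falls into $\delta=\frac12$ and gives $N^{-1/4}$, consistent with this.
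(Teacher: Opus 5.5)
Your route matches the paper's. You compare exponents in the three-term bound $C_1N^{\gamma+\delta-1}+C_2N^{-\gamma-\delta}\log(N^{2\delta})+C_3N^{-\gamma-\delta}$, realize $\delta=0$ with the instance of Proposition~\ref{prop:bad_example}, and realize $\delta=\frac12$ with an instance whose subgradients all have norm $G$. The paper implicitly uses the $G\|x\|_\infty$ instance from the proof of Theorem~\ref{thm:lower_bound} for this. You then substitute. Your remark that for $s=\gamma+\delta>\frac12$ the hidden constant is of order $\frac{1}{e(2s-1)}$ is correct. It is more careful than the paper, which simply declares $C_1N^{\gamma+\delta-1}$ dominant.

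The step that fails is the interior requirement in your $\delta=\frac12$ instance. Keeping $x^1,\dots,x^{N-1}$ inside $(a,\infty)$ needs
$x^1-a>\sum_{k=1}^{N-2}h_kG=\frac{R}{N^{\gamma}}\sum_{k=1}^{N-2}\frac{1}{\sqrt{1+k}}\geqslant\frac{2R(\sqrt N-\sqrt2)}{N^{\gamma}}$.
But \eqref{neq_R} gives $x^1-a\leqslant R$, and for every $\gamma\leqslant\frac12$ these conflict once $N\geqslant 8$. Rescaling $R$ does not help, since $h=R/N^{\gamma}$ scales with it.

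The requirement is also unnecessary. With the paper's definition of $\partial f$ (test points $y\in\R^d$), $f(x)=Gx$ has $\partial f(x)=\{G\}$ everywhere, including at $a$ where the projection parks the iterates. So $\|g^k\|=G$ for all $k$ regardless of where the iterates are. If you instead read $\partial f$ relative to $X$, then $\partial f(a)=(-\infty,G]$, which breaks Assumption~\ref{as:bounded_subgrad} wherever the iterates go, so the interior condition would not help there either. A simpler alternative is $f(x)=G|x|$ on $\R$ with the oracle returning $\pm G$, choosing $G$ at $x=0$.

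Finally, \eqref{eq:worst_case_bound} is later paired with Theorem~\ref{thm:lower_bound} as a worst-case bound. So it is worth noting that it holds for every instance, not only the two you construct. Indeed $N^{\gamma+\delta-1}\leqslant N^{\gamma-\frac12}$, and with $u=\delta\log N$ we have $N^{-\delta}\log(N^{2\delta})=2ue^{-u}\leqslant\frac{2}{e}$. Hence $f(x^{N+1})-f(x^*)\leqslant C_1N^{\gamma-\frac12}+\bigl(\frac{2C_2}{e}+C_3\bigr)N^{-\gamma}$ with absolute constants. This also removes the $(2s-1)$-dependence you flagged.
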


Even for the best choice of $\gamma$, the bound in \eqref{eq:worst_case_bound} is worse than the standard $O(\nicefrac{GR}{N^{\nicefrac{1}{2}}})$ guarantee for the averaged iterate of AdaGrad \citep{duchi2011adaptive}. It is also worse than the $O(\nicefrac{GR\sqrt{\log(N)}}{N^{\nicefrac{1}{2}}})$ bound established for the last iterate of the (non-adaptive) subgradient method with a constant stepsize \citep{zamani2025exact}. These discrepancies naturally raise the question of whether \eqref{eq:worst_case_bound} is tight. The next theorem answers this question in the affirmative.

\begin{theorem}\label{thm:lower_bound}
    For any $\gamma > 0$ and natural $N$, there exists problem \eqref{eq:problem} with $d = 1$ for $\gamma \in (0,\nicefrac{1}{4}]$ and $d = \max\left\{\lceil 16 N^{1-2\gamma} \rceil, 1\right\}$ for $\gamma > \nicefrac{1}{4}$ satisfying Assumptions~\ref{as:convexity} and \ref{as:bounded_subgrad} such that after $N$ iterations of Algorithm~\ref{alg:proj} with stepsizes \eqref{h_k} and $h = \nicefrac{R}{N^\gamma}$, $R \geqslant \|x^1 - x^*\|$ the following inequality holds:
    \begin{equation}
        f(x^{N+1}) - f(x^*) = \Omega\left(\max\left\{\frac{GR}{N^{\gamma}}, \min\left\{\frac{GR}{N^{\frac{1}{2}- \gamma}}, GR\right\}\right\}\right).\label{eq:lower_bound}
    \end{equation}
\end{theorem}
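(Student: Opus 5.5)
The bound \eqref{eq:lower_bound} is a maximum of two terms, so I would give two separate constructions and then take whichever instance is worse for the given $\gamma$ and $N$.

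\textbf{First term, $GR/N^{\gamma}$.} This is exactly the instance of Proposition~\ref{prop:bad_example}. Take $d=1$ and $f(x)=G|x-x^1|$, so that $x^*=x^1$ and the condition $\|x^1-x^*\|\le R$ holds trivially. Choose the subgradients $g^k=0$ for $k<N$ and $g^N=G$. Then \eqref{eq:lower_bound_1} with $h=R/N^\gamma$ gives $f(x^{N+1})-f(x^*)\ge GR/(\sqrt2 N^{\gamma})$. For $\gamma\le 1/4$ we have $N^{-\gamma}\ge N^{\gamma-1/2}$, so this term already dominates the maximum in \eqref{eq:lower_bound}, and $d=1$ suffices there.

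\textbf{Second term, $\min\{GR N^{\gamma-1/2},GR\}$, for $\gamma>1/4$.} Here I want every subgradient to have norm $G$, so that $\delta=1/2$ and the stepsizes are as small as they can be. I would use a Nemirovski/Nesterov-type "max" function. Set $d=\max\{\lceil 16N^{1-2\gamma}\rceil,1\}$, $X=\R^d$, and
$$f(x)=G\max\Big\{\max_{1\le i\le d}x_i,\,-\tfrac{R}{\sqrt d}\Big\},\qquad x^1=0.$$

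\emph{Basic properties.} The function $f$ is convex with subgradients of norm at most $G$. Its minimum value is $-GR/\sqrt d$, attained at $x^*=-\frac{R}{\sqrt d}\mathbf 1$, and $\|x^1-x^*\|=R$. Hence $f(x^1)-f^*=GR/\sqrt d$.

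\emph{Dynamics.} As long as the inner maximum exceeds $-R/\sqrt d$, the oracle returns $g^k=Ge_{i_k}$, where $i_k$ is an index attaining $\max_i x^k_i$. Each such step therefore lowers one maximal coordinate by
$$s_k=h_kG=\frac{R}{N^\gamma\sqrt{1+k}}.$$
Since all $\|g^k\|=G$, the stepsizes are exactly these. Summing, $\sum_{k\le N}s_k\le 2RN^{1/2-\gamma}$.

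\emph{Key observation.} Because we always decrease a maximal coordinate, the maximum coordinate is at least the average coordinate:
$$\max_i x^{N+1}_i\ \ge\ -\frac1d\sum_{k\le N}s_k\ \ge\ -\frac{2RN^{1/2-\gamma}}{d}.$$
When $d=\lceil16N^{1-2\gamma}\rceil\ge 16N^{1-2\gamma}$, the right-hand side is at least $-\tfrac18 RN^{\gamma-1/2}$. On the other hand, $R/\sqrt d\approx \tfrac14 RN^{\gamma-1/2}$, up to a constant factor from the ceiling. In particular the floor $-R/\sqrt d$ is never reached, which justifies the subgradient choice above throughout the run. It follows that
$$f(x^{N+1})-f^*\ \ge\ G\Big(\frac{R}{\sqrt d}-\frac{2RN^{1/2-\gamma}}{d}\Big)=\Omega\big(GRN^{\gamma-1/2}\big).$$

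\emph{Degenerate case $d=1$.} This happens when $16N^{1-2\gamma}\le1$, which occurs only for $\gamma>1/2$. Then $f=G\max\{x,-R\}$ and the total movement is at most $2RN^{1/2-\gamma}\le R/8$. The gap is therefore at least $\tfrac78GR=\Omega(GR)$, which produces the $\min\{\cdot,GR\}$ in \eqref{eq:lower_bound}.

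\textbf{Main obstacle.} The delicate step is the constant bookkeeping in the key observation. I need to verify that, with the ceiling in $d$ and the bound $\sum_{k=1}^N (1+k)^{-1/2}\le 2\sqrt N$, the averaged descent $\frac1d\sum_k s_k$ stays strictly below $R/\sqrt d$ by a constant factor, so that both the gap survives and the floor is never activated. If the constants come out tight, I would increase the factor $16$ or compare against $\sqrt{d}$ directly rather than $N^{1/2-\gamma}$.

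Combining the two instances, picking whichever is worse for the given $\gamma$ and $N$, gives \eqref{eq:lower_bound}.
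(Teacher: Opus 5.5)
Your proposal is correct and follows essentially the paper's route: the same one-dimensional instance $G|x-x^1|$ for $\gamma\le 1/4$, and for $\gamma>1/4$ a one-sided variant of the paper's $G\|x\|_\infty$ instance (start at $0$ with a floor at $-R/\sqrt d$, instead of at $\frac{R}{\sqrt d}\mathbf{1}$ with the kink at the origin), with the same averaging bound $\frac1d\sum_k h_kG\le \frac{2RN^{1/2-\gamma}}{d}\le \frac{R}{2\sqrt d}$ (using $d\ge 16N^{1-2\gamma}$) giving a gap of at least $\frac{GR}{2\sqrt d}$. The only slip is in your degenerate case: $16N^{1-2\gamma}\le 1$ bounds the total movement by $R/2$, not $R/8$, so the gap is at least $GR/2$ rather than $\frac78 GR$; this is still $\Omega(GR)$ and is already covered by the general bound.
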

\begin{proof}
    The proof of this theorem is given in Appendix~\ref{appendix:lower_bound}.
\end{proof}

Combining the upper bound in \eqref{eq:worst_case_bound} with the lower bound in \eqref{eq:lower_bound}, we conclude that the optimal worst-case last-iterate rate for AdaGrad with $h = \nicefrac{R}{N^\gamma}$ is $O(\nicefrac{GR}{N^{\nicefrac{1}{4}}})$, attained at $\gamma = \nicefrac{1}{4}$. We also note that for $\gamma > \nicefrac{1}{2}$ the lower bound becomes $\Omega(GR)$, which is why we do not consider $\gamma > \nicefrac{1}{2}$ in Theorem~\ref{th_main}.

\section{Numerical Experiments}\label{sec_exper}

\begin{figure}[t]
    \centering
    \includegraphics[width=1.0\linewidth]{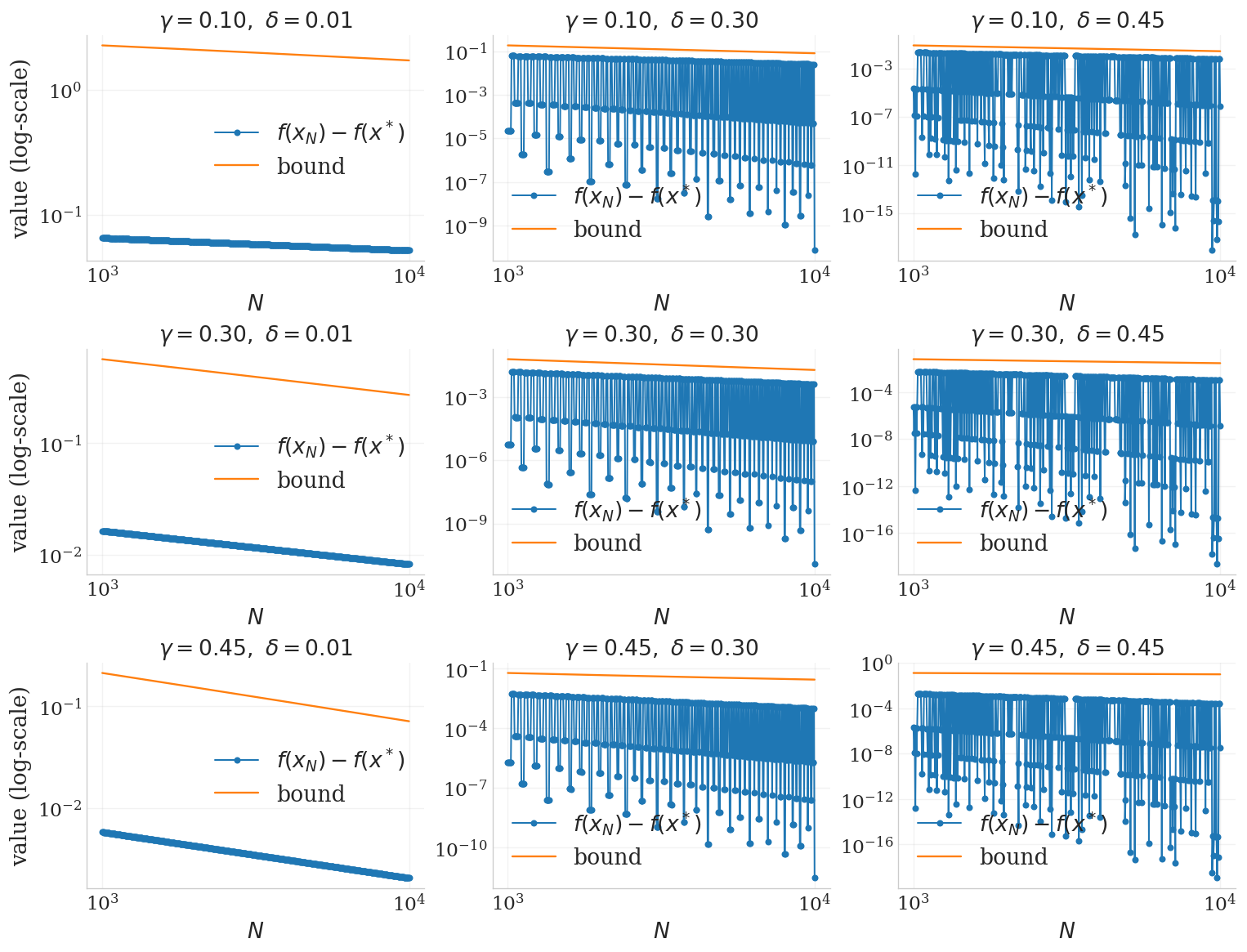}
    \caption{Empirical last-iterate error and the theoretical bound for AdaGrad with fixed values of \(\delta\) and different choices of \(\gamma\).}
    \label{fig:delta_fixed}
\end{figure}

\begin{figure}[t]
    \centering
    \includegraphics[width=1.0\linewidth]{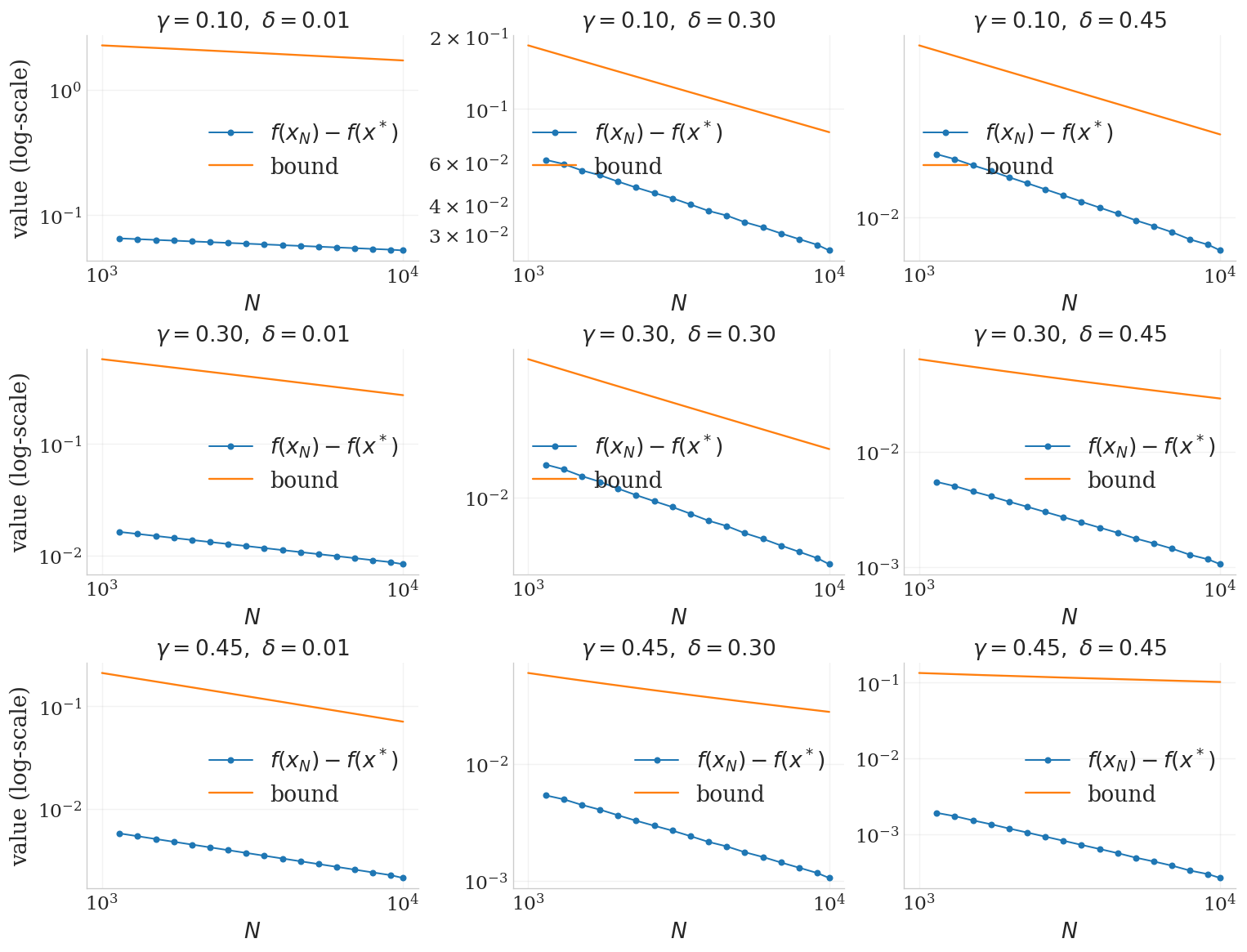}
    \caption{Empirical last-iterate error and the theoretical bound for AdaGrad with fixed values of \(\delta\) and different choices of \(\gamma\) with window maximum applied .}
    \label{fig:smooth_delta}
\end{figure}

\begin{table}[ht]
\centering
\caption{Empirical and theoretical slopes for different parameter combinations. Each cell shows the empirical slope $k_{\text{emp}}$ (top) and the bound slope $k_{\text{bound}}$ (bottom).}
\label{tab:slopes_comparison}
\begin{tabular}{c|S[table-format=-1.4] S[table-format=-1.4] S[table-format=-1.4] S[table-format=-1.4] S[table-format=-1.4] S[table-format=-1.4]}
\toprule
\multicolumn{1}{c|}{\multirow{2}{*}{$\delta$}} & \multicolumn{6}{c}{$\gamma$} \\
\cmidrule(lr){2-7}
 & {0.01} & {0.10} & {0.20} & {0.30} & {0.40} & {0.49} \\
\midrule
\multirow{2}{*}{0.01} & -0.0101 & -0.1006 & -0.2013 & -0.3019 & -0.4026 & -0.4931 \\
                       & -0.0305 & -0.1206 & -0.2209 & -0.3217 & -0.4233 & -0.5100 \\
\midrule
\multirow{2}{*}{0.10} & -0.5692 & -0.6606 & -0.7620 & -0.8635 & -0.9650 & -1.0564 \\
                       & -0.1294 & -0.2197 & -0.3209 & -0.4231 & -0.5190 & -0.5542 \\
\midrule
\multirow{2}{*}{0.20} & -0.1956 & -0.2856 & -0.3856 & -0.4857 & -0.5857 & -0.6757 \\
                       & -0.1913 & -0.2824 & -0.3850 & -0.4831 & -0.5250 & -0.4347 \\
\midrule
\multirow{2}{*}{0.30} & -0.3077 & -0.3981 & -0.4985 & -0.5989 & -0.6993 & -0.7897 \\
                       & -0.2677 & -0.3601 & -0.4596 & -0.5104 & -0.4164 & -0.2669 \\
\midrule
\multirow{2}{*}{0.40} & -0.4135 & -0.5045 & -0.6057 & -0.7069 & -0.8081 & -0.8991 \\
                       & -0.3565 & -0.4464 & -0.5038 & -0.4199 & -0.2540 & -0.1290 \\
\midrule
\multirow{2}{*}{0.49} & -0.5032 & -0.5939 & -0.6946 & -0.7954 & -0.8961 & -0.9868 \\
                       & -0.4388 & -0.4991 & -0.4392 & -0.2740 & -0.1302 & -0.0261 \\
\bottomrule
\end{tabular}
\end{table}

We conduct numerical experiments to assess how tight the bound of Theorem~\ref{th_main} is in practice for the last iterate of AdaGrad. We consider the one-dimensional convex non-smooth objective
$f(x) = |x|$,
with the initial point \( x^1 = 0 \). To construct a worst-case instance consistent with the theoretical assumptions, we define the following time-dependent subgradient oracle:
\[
g^k =
\begin{cases}
0, & \text{if } k < N - m , \\
-1, & \text{if } k > N - m \text{ and } x^k = 0, \\
\mathrm{sign}(x^k), & \text{if } k > N - m \text{ and } x^k \neq 0,
\end{cases}
\]
where \( m = \lceil N^{2\delta} \rceil \). This choice ensures that only the last \( m \) iterations contribute nontrivially to the cumulative squared subgradient norm, allowing us to explicitly control the parameter \( \delta \). 
For fixed values of the parameters \( \gamma \) and \( \delta \), we run Algorithm~\ref{alg:proj} for different values of \( N \). For each run, we compute the empirical error \( f(x^N) - f(x^\ast) \) and compare it with the theoretical upper bound from Theorem~\ref{th_main}.

As illustrated in Figure~\ref{fig:delta_fixed}, the dependence \( m = \lceil N^{2\delta} \rceil \) introduces visible discontinuities in the empirical curves. These ``jumps'' are caused by rounding in the definition of \( m \): the effective value of \( \delta \) realized in a given run does not exactly coincide with the prescribed one, but only approximates it.

To reduce the visual impact of the discontinuities caused by the rounding of $m = \lceil N^{2\delta}\rceil$, we smooth the empirical curves by applying a windowed maximum: for each consecutive block of iterations we plot the maximum value observed in that block. This operation preserves the worst‑case trend of the sequence and allows us to approximate the resulting curve by a linear function and compare the slope in log‑log scale. The curves smoothed with the windowed maximum are shown in Figure ~\ref{fig:smooth_delta}, and the results of this comparison are presented in Table~\ref{tab:slopes_comparison}.

\section{Conclusion}\label{ref_concl}

In this paper, we derive the convergence bound $O\left(\max\left\{\nicefrac{GR}{N^{\gamma}}, \nicefrac{GR}{N^{\frac{1}{2}- \gamma}}\right\}\right)$ for the last iterate of AdaGrad with the stepsize parameter $h = \nicefrac{R}{N^\gamma}$, $\gamma \in (0,\nicefrac{1}{2}]$, in convex optimization with $G$-bounded subgradients, and we show that this guarantee is tight by providing a matching lower bound. In particular, our results imply that the minimax-optimal convergence rate attained by the last iterate of standard AdaGrad is $O(\nicefrac{GR}{N^{\nicefrac{1}{4}}})$. Our main technical contribution is the introduction of the parameter $\delta$, which describes the growth of the cumulative squared subgradients. This viewpoint enables tight upper bounds for the sums that naturally arise in the analysis and capture the non-trivial interaction between the adaptive stepsizes $h_k$ and the squared subgradient norms $\|g^k\|^2$. We expect this approach to be useful for analyzing other AdaGrad-like methods.

Our paper also leaves several interesting open problems. In particular, it would be valuable to extend our analysis to the classical coordinate-wise version of AdaGrad and to stochastic settings. Another promising direction is to study last-iterate convergence of AdaGrad for smooth convex objectives. Finally, it would be interesting to investigate the last-iterate convergence of related methods such as RMSProp \citep{hinton2012neural} and Adam \citep{kingma2015adam}.

\section*{Acknowledgments}
The work of Margarita Preobrazhenskaia, Makar
Sidorov, and Igor Preobrazhenskii was carried out within the framework of a development programme for the Regional Scientific and Educational Mathematical Center of the Yaroslavl State University with financial support from the Ministry of Science and Higher Education of the Russian Federation (Agreement on provision of subsidy from the federal budget No. 075-02-2026-1331).

\bibliographystyle{apalike}
\bibliography{./refs}

\appendix

\section{Proof of Theorem~\ref{th_main}}\label{sec_app_A}

\subsection{Proof Sketch}

We sketch the proof of Theorem~\ref{th_main}. Our analysis follows an asymptotic viewpoint and builds on the following inequality for the last iterate of subgradient methods, proved by \citet{zamani2025exact}.

\begin{lemma} [\citet{zamani2025exact}]
\label{lem_Zamani}
Let $f$ be a convex function and $X$ be a closed convex set. Let $\hat{x}\in X$, $h_{N+1}>0$, and
\begin{equation}
\label{cond_v}
0\leqslant v_0\leqslant v_1\leqslant\ldots\leqslant v_N\leqslant v_{N+1}.
\end{equation}
Then, if the subgradient method (Algorithm 1) with starting point $x^1\in X$ generates $\{(x^k,g^k)\}$, then the inequality holds:
\begin{equation}
\label{neq_Zamani}
\sum_{k=1}^{N+1}c_k(f(x^k)-f(\hat{x}))\leqslant\frac{v_0^2}{2}\|x^1-\hat{x}\|^2+\frac12\sum_{k=1}^{N+1}h_k^2v_k^2\|g^k\|^2,
\end{equation}
\begin{equation}
\label{ck}
\text{where}\quad c_k=h_kv_k^2-(v_k-v_{k-1})\sum_{i=k}^{N+1}h_iv_i, \quad k=1,\ldots,N+1.
\end{equation}
\end{lemma}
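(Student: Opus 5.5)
The plan is to reproduce the ``moving reference point'' argument of \citet{zamani2025exact}. Instead of comparing every iterate with $\hat x$, I compare $x^k$ with a point $z^k\in X$ that drifts from $\hat x$ towards the iterates. The weights $v_k$ are chosen so that the weighted distances telescope exactly. Throughout, $h_{N+1}>0$ and $g^{N+1}\in\partial f(x^{N+1})$ define a formal extra step $x^{N+2}=P_X(x^{N+1}-h_{N+1}g^{N+1})$, used only in the analysis.

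\emph{Step 1 (reference points).} First assume $v_0>0$. For $k=0,\dots,N+1$ define
\[
z^k=\frac{1}{v_k}\Big(v_0\hat x+\sum_{j=1}^{k}(v_j-v_{j-1})x^j\Big).
\]
By \eqref{cond_v} all weights are nonnegative and they sum to $v_k$. Hence $z^k$ is a convex combination of $\hat x,x^1,\dots,x^k\in X$, so $z^k\in X$. Moreover $v_{k}z^{k}=v_{k-1}z^{k-1}+(v_{k}-v_{k-1})x^{k}$. This gives $v_k(x^k-z^k)=v_{k-1}(x^k-z^{k-1})$, and therefore
\[
v_k^2\|x^k-z^k\|^2=v_{k-1}^2\|x^k-z^{k-1}\|^2 .
\]

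\emph{Step 2 (one-step inequality and telescoping).} Since $z^k\in X$, nonexpansiveness of $P_X$ gives
\[
\|x^{k+1}-z^k\|^2\le\|x^k-z^k\|^2-2h_k\langle g^k,x^k-z^k\rangle+h_k^2\|g^k\|^2 .
\]
I multiply this by $v_k^2$ and use Step 1 to rewrite $v_k^2\|x^k-z^k\|^2$ as $v_{k-1}^2\|x^k-z^{k-1}\|^2$. Summing over $k=1,\dots,N+1$ and dropping the final nonnegative term $v_{N+1}^2\|x^{N+2}-z^{N+1}\|^2$ yields, since $z^0=\hat x$,
\[
2\sum_{k=1}^{N+1}h_kv_k^2\langle g^k,x^k-z^k\rangle\le v_0^2\|x^1-\hat x\|^2+\sum_{k=1}^{N+1}h_k^2v_k^2\|g^k\|^2 .
\]

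\emph{Step 3 (convexity and bookkeeping).} By the subgradient inequality, $\langle g^k,x^k-z^k\rangle\ge f(x^k)-f(z^k)$. By Jensen applied to the convex combination defining $z^k$,
\[
v_kf(z^k)\le v_0f(\hat x)+\sum_{j\le k}(v_j-v_{j-1})f(x^j).
\]
Hence
\[
h_kv_k^2\langle g^k,x^k-z^k\rangle\ge h_kv_k\Big(v_kf(x^k)-v_0f(\hat x)-\sum_{j\le k}(v_j-v_{j-1})f(x^j)\Big).
\]
I then collect coefficients after summing over $k$. The value $f(x^j)$ receives $h_jv_j^2-(v_j-v_{j-1})\sum_{i\ge j}h_iv_i=c_j$, as in \eqref{ck}. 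The value $f(\hat x)$ receives $-v_0\sum_i h_iv_i$. A telescoping computation shows
\[
\sum_j c_j=\sum_i h_iv_i\big(v_i-(v_i-v_0)\big)=v_0\sum_ih_iv_i,
\]
so the left side equals $\sum_k c_k\big(f(x^k)-f(\hat x)\big)$, which gives \eqref{neq_Zamani}. The degenerate case $v_0=0$ is handled either by continuity (replace $v_0$ by $\varepsilon>0$ and let $\varepsilon\to0$) or by starting the convex combination at the first index $k$ with $v_k>0$. If $v_{N+1}=0$, then all $v_k=0$ and the inequality is trivially $0\le0$.

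The main obstacle is Step 1, namely finding the reference points that make the weighted distance terms telescope \emph{exactly} while staying in $X$. Once the identity $v_k(x^k-z^k)=v_{k-1}(x^k-z^{k-1})$ is noticed, the rest is routine. The coefficient bookkeeping in Step 3 is the only place where care with the order of summation is needed.
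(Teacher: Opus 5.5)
Your argument is correct and is essentially the moving-reference-point proof of \citet{zamani2025exact}; the paper itself does not reprove this lemma but imports it from that work. The steps all check out: the convex combinations $z^k$ of $\hat x,x^1,\dots,x^k$, the exact telescoping identity $v_k(x^k-z^k)=v_{k-1}(x^k-z^{k-1})$, and the Jensen bookkeeping that produces $c_k$ and $\sum_k c_k=v_0\sum_i h_iv_i$. One small caveat on the degenerate case $v_0=0$: replacing only $v_0$ by $\varepsilon$ breaks $v_0\leqslant v_1$ when $v_1=0$, so either shift every $v_k$ by $\varepsilon$ or use your first-positive-index variant, which works as stated (in the paper's application all $v_k$ are positive anyway).
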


We choose the numbers $v_k$ so that all coefficients in~(\ref{ck}) vanish except the last one:
$$
v_{N+1}=1,\quad v_N=1, \quad v_k=\frac{h_{N+1}}{\sum\limits_{i=k+1}^{N+1}h_iv_i}\quad\text{for}\quad k=0,\ldots,N.
$$
Lemma~\ref{lem_ck01} in Appendix~\ref{sec_tech} shows that this choice of $v_k$ guarantees $c_{N+1}=h_{N+1}$ and $c_k=0$ for $k=1,\ldots,N.$
Consequently, for the above $v_k$, inequality~(\ref{neq_Zamani}) reduces to
for chosen $v_k$, inequality (\ref{neq_Zamani}) takes the form
\begin{equation*}
f(x^{N+1})-f(\hat{x})\leqslant\frac{v_0^2}{2h_{N+1}}\|x^1-\hat{x}\|^2+\frac{1}{2h_{N+1}}\sum_{k=1}^{N+1}h_k^2v_k^2\|g^k\|^2.
\end{equation*}
Next, we separately evaluate the $1$-st, $N$-th, $(N+1)$-th terms on the right-hand side, as well as the sum
\begin{equation}
\label{sum_N-1}
    \sum\limits_{k=1}^{N-1}h_k^2v_k^2\|g^k\|^2.
\end{equation}

Lemma~\ref{lem_x} is the key tool for estimating \eqref{sum_N-1}. The basic idea is motivated by asymptotic analysis and consists of treating the sum $\sum_{k=1}^{N-1}h_k^2v_k^2\|g^k\|^2$ as a quantity of order $N^{2\delta}$. This approach is standard for identifying leading orders of growth. However, in our case, we do not assume that $N$ is large and instead derive estimates that hold for any finite $N$. Lemma~\ref{lem_x} is proved in Appendix~\ref{sec_key_lem}.

Appendix~\ref{sec_tech} is devoted to technical results. It establishes properties of the sequence $v_k$, similar to those in \citet{zamani2025exact}, and proves Lemma~\ref{lem_ck01}, which is used to bound the first and the $(N+1)$-th terms in the sum~(\ref{sum_N-1}). The final reasoning that completes the proof of Theorem~\ref{th_main} is presented in Appendix~\ref{sec_end_proof}.

\subsection{Key Lemma}\label{sec_key_lem}

\begin{lemma}
\label{lem_x}
Let $y_i$, $i=1,\ldots,N-1,$ be such that $0\leqslant y_i \leqslant 1$, and $1+\sum_{i=1}^{N-1}y_i= N^{2\delta}$, $\delta\in[0,\frac12]$. Then,
\begin{equation}
    \sum_{k=1}^{N-1}\frac{y_k}{(N-k)(1+\sum_{i=1}^ky_i)}\leqslant \frac{4\log (N^{2\delta})+5}{\max\{N^{2\delta}-1, 1\}}. \label{eq:main_technical_ineq}
\end{equation}
\end{lemma}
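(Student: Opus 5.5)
Write $S_k := 1+\sum_{i=1}^k y_i$, so $S_0=1$, $S_{N-1}=N^{2\delta}=:M$, and $S_k$ is nondecreasing with increments $y_k=S_k-S_{k-1}\in[0,1]$. The sum is
\[
\Sigma:=\sum_{k=1}^{N-1}\frac{S_k-S_{k-1}}{(N-k)\,S_k}.
\]
The idea is to split the index range at the point where the remaining horizon $N-k$ becomes comparable to $M$. For small $k$ the weight $1/(N-k)$ is small. For $k$ close to $N$ the increments are forced to be small in total, because $S_{N-1}-S_k\le N-1-k$ gives $S_k\ge M-(N-1-k)$.

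\textbf{Trivial case.} If $M\le 2$, the right-hand side is at least $5$. In that case $\Sigma\le\sum_{k} y_k/S_k\le \sum_k y_k = M-1\le 1$, so there is nothing to prove. Assume from now on that $M>2$, so $\max\{M-1,1\}=M-1\ge M/2$.

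\textbf{Step 1 (far from the end).} Let $K:=N-\lceil M/2\rceil$. For $k\le K$ we have $N-k\ge M/2$, hence
\[
\sum_{k\le K}\frac{y_k}{(N-k)S_k}\le\frac{2}{M}\sum_{k\le K}\frac{S_k-S_{k-1}}{S_k}.
\]
Using $(S_k-S_{k-1})/S_k\le \log(S_k/S_{k-1})$, the sum telescopes to at most $\log M$. This part is therefore at most $2\log M/M\le 2\log(N^{2\delta})/(M-1)$. A finer dyadic version is not needed, because the target already has a factor of $4$ in front of the logarithm.

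\textbf{Step 2 (near the end).} For $K<k\le N-1$ write $j=N-k\in\{1,\dots,\lceil M/2\rceil-1\}$. Then $S_k\ge M-(j-1)\ge M/2$, so the denominator satisfies $S_k\ge M/2$ and each term is at most $\frac{2}{M}\cdot\frac{y_k}{j}\le\frac{2}{M j}$. Summing over $j$ gives only $\frac{2}{M}(1+\log(M/2))$. Combined with Step 1, this yields roughly $\frac{4\log M+2}{M}$, which is bounded by $\frac{4\log M+5}{M-1}$.

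\textbf{Main obstacle.} The delicate part is the bookkeeping of the boundary. One must handle the ceiling in $K$, the cases where $K\le 0$ (when $M\ge 2N$, impossible since $M\le N$, but the case $M$ close to $N$ needs $K\ge0$ checks), and the exact constants, so that the final bound uses $M-1$ in the denominator. I would first try to keep the crude bounds above and then verify numerically that the slack constant $5$ absorbs the rounding terms. If it does not, I would refine Step 2 by using $S_k\ge M-j+1$ exactly, which gives $\sum_j \frac{1}{j(M-j+1)}=\frac{2H_{\cdot}}{M+1}$-type partial fractions.
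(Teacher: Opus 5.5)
Your argument is correct and already complete, so the hedging in your last paragraph can be dropped. The ceilings are handled exactly by what you wrote. For $k\le K$ you have $N-k\ge\lceil M/2\rceil\ge M/2$. For $k>K$, $j\le\lceil M/2\rceil-1<M/2$ gives $S_k\ge M-j+1>M/2$. Moreover $K\ge 0$ since $M=1+\sum_i y_i\le N$, and an empty first block would be harmless anyway. The two steps combine to $\Sigma\le\frac{4\log M+2-2\log 2}{M}\le\frac{4\log M+5}{M-1}$, so no numerical check and no refinement of Step 2 is needed.

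Your route is genuinely different from the paper's. The paper partitions indices by the \emph{value} of the partial sum into unit layers $\{k: j<S_k\le j+1\}$, for $j=1,\dots,m-1$ with $m=\lceil M\rceil$. On each layer $S_k\ge j$ and the $y$-mass is at most $2$. The budget constraint $S_{N-1}-S_k\le N-1-k$ then places layer $j$ at least $m-j-1$ steps before the horizon. Partial fractions on $\sum_j\frac{2}{j(m-1-j)}$, plus a top-layer term, give $\frac{4H_{m-2}+1}{m-1}$, and the cases $m=1,2$ are handled separately.

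You instead split by \emph{time} at the single threshold $N-k\approx M/2$ and use a different mechanism on each side. Far from the horizon the weight is uniformly at most $2/M$, and the telescoping bound $y_k/S_k\le\log(S_k/S_{k-1})$ replaces the sum over layers. Near the horizon the same budget constraint, read backwards, forces $S_k\ge M/2$ and leaves only $\sum_j 1/j$.

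Your version is shorter, has a single special case ($M\le 2$), and yields a slightly better constant. The paper's layering applies one uniform estimate to every index and expresses the bound through harmonic numbers of $\lceil M\rceil$, but it pays for this in per-layer bookkeeping. By the paper's own per-layer estimate, the top layer $j=m-1$ contributes up to $\frac{2}{m-1}$, not the $\frac{1}{m-1}$ written there. The displayed bound still holds because integrality actually gives $N-k\ge m-j$ on the lower layers.
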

\begin{proof}
We introduce the following notation: $$S_k:=1+\sum\limits_{i=1}^k y_i \quad\text{and}\quad m:=\left\lceil N^{2 \delta}\right\rceil.$$
The latter means that $m-1<N^{2\delta}\leqslant m$. We consider three possible cases.

\paragraph{Case $1$: $m \geqslant 3$.} We will show that for $m \geqslant 3$
\begin{equation}
\label{neq_star}
\sum_{k=1}^{N-1} \frac{y_k}{(N-k)S_k} \leqslant \frac{4 H_{m-2}+1}{m-1} \leqslant \frac{ 4\log (N^{2\delta})+5}{N^{2 \delta}-1},
\end{equation}
where $H_k=1+\frac{1}{2}+\cdots+\frac{1}{k}$ is the $k$-th harmonic number.

We split the indices into ``layers'' based on the level of partial sums. For $j=2, \ldots, m-1$, we set
$$
\mathcal{I}_j:=\left\{k:j<S_k \leqslant j+1\right\} \quad \text{and} \quad \mathcal{I}_1:=\left\{k:1\leqslant S_k \leqslant 2\right\}.
$$
At each layer, the following holds: $S_k \geqslant j$ for $k \in \mathcal{I}_j$, and $\sum_{k \in \mathcal{I}_j} y_k \leqslant 2$.
Therefore, for $j=1,\ldots,m-1$
\begin{equation} 
\label{neq_Ij} 
\sum_{k \in \mathcal{I}_j} \frac{y_k}{(N-k)S_k} \leqslant \frac{1}{j} \sum_{k \in \mathcal{I}_j} \frac{y_k}{N-k} \leqslant \frac{1}{j} \max _{k \in \mathcal{I}_j} \frac{2}{N-k}.
\end{equation}
Let $k_j:=\max_{k \in \mathcal{I}_j} k$. From $S_{k_j}\leqslant j+1$ and $N^{2 \delta}=S_{N-1} \leqslant S_{k_j} + \sum_{i=k_j+1}^{N-1}y_i \leqslant S_{k_j} +  N-k_j-1$, we obtain that for any $k\in\mathcal{I}_j$
and $j=1,\ldots,m-2$
$$
N-k\geqslant N-k_j\geqslant N^{2 \delta}+1-S_{k_j} \geqslant\lceil N^{2 \delta}\rceil-j-1=m-j-1.$$
For $j=m-1$, we have $k_{m-1}:=\max_{k\in \mathcal{I}_{m-1}} k = N-1$, i.e., for $k \in \mathcal{I}_{m-1}$ we have $N-k \geq N-k_{m-1}=1$. Substituting into (\ref{neq_Ij}) and summing over layers, we obtain
\begin{multline*}
\sum_{k=1}^{N-1} \frac{y_k}{(N-k) S_k} \leqslant \sum_{j=1}^{m-2} \frac{2}{j(m-j-1)}+\frac{1}{m-1}=
\\\frac{2}{m-1}\sum_{j=1}^{m-2} \left(\frac{1}{j}+\frac{1}{m-j-1}\right)+\frac{1}{m-1}=
\frac{1}{m-1} \left(4\sum_{j=1}^{m-2} \frac{1}{j}+1\right)=
\frac{4 H_{m-2}+1}{m-1},
\end{multline*}
which yields the first inequality in (\ref{neq_star}). Next, we use $H_{m-2} < \log (m-2)+1 \leqslant \log \left(N^{2 \delta}-1\right)+1 $, as well as $m-1 \geqslant N^{2 \delta}-1$, yielding the second part of (\ref{neq_star}).

\paragraph{Case $2$: $m = 2$.} In this case, we have that $\mathcal{I}_1 = \{1,\ldots, N-1\}$ and
\begin{equation*}
    \sum_{k=1}^{N-1} \frac{y_k}{(N-k) S_k} = \sum_{k\in \mathcal{I}_1} \frac{y_k}{(N-k) S_k} \overset{\eqref{neq_Ij}}{\leqslant} 2 \leqslant \frac{4\log (N^{2\delta})+5}{\max\{N^{2\delta}-1, 1\}},
\end{equation*}
since $\max\{N^{2\delta}-1, 1\} = 1$ for $m = 2$.

\paragraph{Case $3$: $m = 1$.} If $m=1$, then $\delta = 0$ and $y_i = 0$ for all $i = 1,\ldots, N-1$. In this case, \eqref{eq:main_technical_ineq} is satisfied since the left-hand side of \eqref{eq:main_technical_ineq} equals $0$, while the right-hand side of \eqref{eq:main_technical_ineq} equals $5$.

\end{proof}

\subsection{Technical Results}\label{sec_tech}
\begin{lemma}
\label{lem_ck01}
Let
\begin{equation}
\label{v_choise}
v_{N+1}=1,\quad v_k=\frac{h_{N+1}}{\sum\limits_{i=k+1}^{N+1}h_iv_i}\quad\text{for}\quad k=0,\ldots,N.
\end{equation} Then $v_N=1$, and for $c_k$ defined by equality (\ref{ck}), the following is true:
\begin{equation}
\label{ck01}
c_{N+1}=h_{N+1},\quad c_k=0\quad\text{for}\quad k=1,\ldots,N.
\end{equation}
\end{lemma}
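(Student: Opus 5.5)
The argument is a direct computation from the recursive definition \eqref{v_choise}, combined with a telescoping identity. I will write $T_k:=\sum_{i=k}^{N+1}h_iv_i$ for $k=1,\ldots,N+1$, so that \eqref{v_choise} reads $v_k = h_{N+1}/T_{k+1}$ for $k=0,\ldots,N$. The definition is a backward recursion. Starting from $v_{N+1}=1$, we get $T_{N+1}=h_{N+1}$. The formula then yields $v_N = h_{N+1}/T_{N+1} = 1$, which proves the first claim. Each subsequent $v_k$ is determined by the already computed $v_{k+1},\ldots,v_{N+1}$. Positivity of all $h_i$ gives $v_k>0$ and $T_k>0$ throughout.

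For $c_{N+1}$, I plug into \eqref{ck}. This gives $c_{N+1} = h_{N+1}v_{N+1}^2 - (v_{N+1}-v_N)h_{N+1}v_{N+1}$. Since $v_N=v_{N+1}=1$, the second term vanishes and $c_{N+1}=h_{N+1}$.

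For $1\leqslant k\leqslant N$, I rewrite the recursion as $v_{k-1}T_k = h_{N+1}$ and $v_kT_{k+1}=h_{N+1}$. Since $T_k = h_kv_k + T_{k+1}$, the coefficient becomes
$$c_k = h_kv_k^2 - v_kT_k + v_{k-1}T_k = h_kv_k^2 - v_k(h_kv_k+T_{k+1}) + h_{N+1} = -v_kT_{k+1}+h_{N+1} = 0.$$
This uses only the identities for indices $k-1$ and $k$, both of which lie in $\{0,\ldots,N\}$, so it is valid for every $k=1,\ldots,N$.

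Finally, I will check that this choice meets the hypothesis \eqref{cond_v} of Lemma~\ref{lem_Zamani}, since it is needed later even though the statement does not list it. Because $T_k = T_{k+1}+h_kv_k > T_{k+1}$, we get $v_{k-1} = h_{N+1}/T_k < h_{N+1}/T_{k+1} = v_k$. Together with $v_N=v_{N+1}=1$ and $v_0>0$, this gives $0\leqslant v_0\leqslant\cdots\leqslant v_{N+1}$.

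There is no real obstacle here. The only point requiring care is the index bookkeeping: the recursion runs down to $v_0$, which is what covers the case $k=1$, and the $(N+1)$-th term must be handled separately, since $v_{N+1}$ is fixed by definition rather than by the recursion.
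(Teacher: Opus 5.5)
Your proposal is correct and matches the paper's proof: it computes $v_N$ and $c_{N+1}$ directly, then expands $\sum_{i=k}^{N+1}h_iv_i = h_kv_k+\sum_{i=k+1}^{N+1}h_iv_i$ and applies the defining identity at indices $k-1$ and $k$ to get $c_k=h_{N+1}-h_{N+1}=0$. Your extra check that $0\leqslant v_0\leqslant\cdots\leqslant v_{N+1}$ is what the paper records in the remark immediately after the lemma.
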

\begin{proof}
The expression for $v_k$, according to (\ref{v_choise}), for $k=N$:
$$v_N=\frac{h_{N+1}}{h_{N+1}v_{N+1}}=1.$$
The expression for $c_k$, according to (\ref{ck}), for $k=N+1$:
$$c_{N+1}=h_{N+1}v_{N+1}^2-(v_{N+1}-v_{N})h_{N+1}v_{N+1}=v_Nh_{N+1}v_{N+1}=h_{N+1}.$$
The expression for $c_k$, according to (\ref{ck}), with $k=1,\ldots,N$: 
\begin{multline*} 
c_k=h_kv_k^2-(v_k-v_{k-1})\sum_{i=k}^{N+1}h_iv_i=\\ 
h_kv_k^2-v_k(v_kh_k+\sum_{i=k+1}^{N+1}h_iv_i)+v_{k-1}\sum_{i=k}^{N+1}h_iv_i=\\ 
-v_k\sum_{i=k+1}^{N+1}h_iv_i+v_{k-1}\sum_{i=k}^{N+1}h_iv_i=h_{N+1}-h_{N+1}=0. 
\end{multline*}
\end{proof}

\begin{remark}
If $h_i>0$, then $v_k$, defined by the equalities (\ref{v_choise}), are positive and increasing, i.e., (\ref{cond_v}) holds.
\end{remark}

\begin{remark}
For $v_k$ chosen according to (\ref{v_choise}), inequality (\ref{neq_Zamani}) takes the form
\begin{equation}
\label{neq_Zamani_1}
f(x^{N+1})-f(\hat{x})\leqslant\frac{v_0^2}{2h_{N+1}}\|x^1-\hat{x}\|^2+\frac{1}{2h_{N+1}}\sum_{k=1}^{N+1}h_k^2v_k^2\|g^k\|^2.
\end{equation}
\end{remark}

\begin{lemma}
\label{lem_vk_prop}
The coefficients $v_k$, defined in (\ref{v_choise}), have the following properties.
\begin{enumerate} 
\item $\dfrac{1}{v_{k}}=\dfrac{1}{v_{k+1}}+\dfrac{h_{k+1} v_{k+1}}{h_{N+1}}$ for $k=0,\ldots,N-1$. 
\item $\dfrac{1}{v_k^2}=1+\dfrac{2}{h_{N+1}}\sum\limits_{i=k+1}^{N}h_i+\dfrac{1}{h_{N+1}^2}\sum\limits_{i=k+1}^{N}h_i^2v_i^2$  for $k=0,\ldots,N-1$.
\end{enumerate}
\end{lemma}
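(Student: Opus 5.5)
Both properties follow directly from the defining relation \eqref{v_choise}. I would first rewrite it in reciprocal form and take differences of consecutive indices. Then I would telescope a squared version of the resulting recursion.

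For property 1, \eqref{v_choise} gives, for $k=0,\ldots,N$,
\[
\frac{1}{v_k}=\frac{1}{h_{N+1}}\sum_{i=k+1}^{N+1}h_iv_i .
\]
Subtracting the same identity at index $k+1$, valid for $k+1\leqslant N$, i.e.\ $k\leqslant N-1$, leaves only the $i=k+1$ term:
\[
\frac{1}{v_k}-\frac{1}{v_{k+1}}=\frac{h_{k+1}v_{k+1}}{h_{N+1}}.
\]
This is exactly property 1. Positivity of the $v_k$ (noted in the remark above) guarantees that all the reciprocals are well defined.

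For property 2, I would square property 1:
\[
\frac{1}{v_k^2}=\frac{1}{v_{k+1}^2}+\frac{2h_{k+1}}{h_{N+1}}+\frac{h_{k+1}^2v_{k+1}^2}{h_{N+1}^2}.
\]
The cross term simplifies because $\frac{1}{v_{k+1}}\cdot h_{k+1}v_{k+1}=h_{k+1}$, so the $v_{k+1}$ cancels. Telescoping this from $k$ up to $N-1$ gives
\[
\frac{1}{v_k^2}=\frac{1}{v_N^2}+\frac{2}{h_{N+1}}\sum_{i=k+1}^{N}h_i+\frac{1}{h_{N+1}^2}\sum_{i=k+1}^{N}h_i^2v_i^2 .
\]
Finally, $v_N=1$ by Lemma~\ref{lem_ck01}, which yields property 2.

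There is no real obstacle here. The only points needing care are the index ranges (the recursion is used only for $k\leqslant N-1$, with base case $v_N=1$) and noticing the cancellation in the cross term.
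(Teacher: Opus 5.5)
Your proof is correct and follows the same route as the paper. Property 1 comes from isolating the $i=k+1$ term of the reciprocal definition; property 2 comes from squaring property 1 (the cross term collapses to $2h_{k+1}/h_{N+1}$) and telescoping down to the base value. Your base case $1/v_N^2=1$ is the index-accurate one, since property 1 only holds up to $k=N-1$; the paper writes $1/v_{N+1}^2$ at that step, which gives the same value because $v_N=v_{N+1}=1$.
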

\begin{proof} 
\begin{enumerate} 
\item This part follows from $\dfrac{1}{v_k}=\dfrac{1}{h_{N+1}}\sum\limits_{i=k+1}^{N+1}h_iv_i = \dfrac{1}{h_{N+1}}\sum\limits_{i=k+2}^{N+1}h_iv_i + \dfrac{h_{k+1}v_{k+1}}{h_{N+1}} = \dfrac{1}{v_{k+1}}+\dfrac{h_{k+1} v_{k+1}}{h_{N+1}}$ for $k=0,\ldots,N-1$.

\item From part 1 of this lemma we have $\dfrac{1}{v_k^2}=\left(\dfrac{1}{v_{k+1}}+\dfrac{h_{k+1} v_{k+1}}{h_{N+1}}\right)^2=\dfrac{1}
{v_{k+1}^2}+\dfrac{2h_{k+1}}{h_{N+1}}+\dfrac{h_{k+1}^2 v_{k+1}^2}{h_{N+1}^2}=\dfrac{1}
{v_{N+1}^2}+\dfrac{2}{h_{N+1}}\sum\limits_{i=k+1}^{N}h_i+\dfrac{1}{h_{N+1}^2}\sum\limits_{i=k+1}^{N}h_i^2v_i^2=1+\dfrac{2}{h_{N+1}}\sum\limits_{i=k+1}^{N}h_i+\dfrac{1}{h_{N+1}^2}\sum\limits_{i=k+1}^{N}h_i^2v_i^2.$ 
\end{enumerate}
\end{proof}

\begin{corollary}
\label{col_vk_neq}
For $k=0,\ldots,N-1$, the inequality
\begin{equation}
\label{neq_vk}
v_{k}^2\leqslant
\dfrac{1}{2(N-k)+1}.
\end{equation}
\end{corollary}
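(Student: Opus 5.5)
From part 2 of Lemma~\ref{lem_vk_prop} we have, for $k=0,\ldots,N-1$,
$$\frac{1}{v_k^2}=1+\frac{2}{h_{N+1}}\sum_{i=k+1}^{N}h_i+\frac{1}{h_{N+1}^2}\sum_{i=k+1}^{N}h_i^2v_i^2 .$$
My plan is to drop the last sum, which is nonnegative, and then bound the middle sum from below by $2(N-k)$. The key observation is that the AdaGrad stepsizes \eqref{h_k} are nonincreasing in $k$. Indeed, the denominator $\sqrt{G^2+\sum_{i=1}^k\|g^i\|^2}$ is nondecreasing in $k$, because every added term $\|g^i\|^2$ is nonnegative. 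Hence $h_i\geqslant h_{N+1}$ for every $i\leqslant N+1$. Here $h_{N+1}$ is understood through the same formula \eqref{h_k} with $k=N+1$ (i.e., using some $g^{N+1}\in\partial f(x^{N+1})$), as Lemma~\ref{lem_Zamani} requires.

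The steps are then as follows. First, since $h_i>0$, the last sum in the identity is nonnegative, so
$$\frac{1}{v_k^2}\geqslant 1+\frac{2}{h_{N+1}}\sum_{i=k+1}^{N}h_i .$$
Second, each ratio satisfies $h_i/h_{N+1}\geqslant 1$ for $i=k+1,\ldots,N$, and there are $N-k$ such indices. This gives $1/v_k^2\geqslant 1+2(N-k)$. Third, $v_k>0$ by the Remark following Lemma~\ref{lem_ck01}, so taking reciprocals yields \eqref{neq_vk}.

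There is no real obstacle in this argument. The only point to be careful about is the monotonicity $h_i\geqslant h_{N+1}$, which uses the convention that $h_{N+1}$ is defined by \eqref{h_k} with the cumulative sum extended to index $N+1$. If instead $h_{N+1}$ were treated as an arbitrary positive number, the bound could fail. So I will state this convention explicitly before applying it.
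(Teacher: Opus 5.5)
Your argument is correct and is exactly the paper's proof. The paper also drops the nonnegative last sum in part~2 of Lemma~\ref{lem_vk_prop}, uses $h_i\geqslant h_{N+1}$ to get $1/v_k^2\geqslant 1+2(N-k)$, and inverts. The only difference is the convention for $h_{N+1}$. Lemma~\ref{lem_Zamani} allows any $h_{N+1}>0$ and does not require the AdaGrad formula, and the paper fixes $h_{N+1}:=h_N$ (Remark~\ref{remark:h_N+1}). Your extended-formula choice also gives $h_i\geqslant h_{N+1}$, so the corollary holds either way. However, the paper's choice is the one used later to obtain \eqref{h_sim2}.
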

\begin{proof} 
Using part 2 of Lemma \ref{lem_vk_prop} 
and the fact that $h_j\geqslant h_{N+1}$ for $j=1,\ldots,N+1$, 
we get 
$$\dfrac{1}{v_k^2}\geqslant 1+\dfrac{2}{h_{N+1}}\sum\limits_{i=k+1}^{N}h_i\geqslant1+\dfrac{2}{h_{N+1}}\sum\limits_{i=k+1}^{N}h_{N+1}=1+2(N-k)$$
for $k=0,\ldots,N-1$. Then we have (\ref{neq_vk}).
\end{proof}

\begin{remark}\label{remark:h_N+1}
    Note that this is the first time in the proof, when we used some assumptions on sequence $h_j$ (besides $h_j > 0$), i.e., that $h_j\geqslant h_{N+1}$ for $j=1,\ldots,N+1$. Since $h_{N+1}$ is not used to generate $x^{N+1}$, we can select $h_{N+1} := h_N$ to satisfy all the conditions needed for the proof.
\end{remark}

\subsection{Proof of Theorem \ref{th_main}}\label{sec_end_proof}
From (\ref{neq_Zamani_1}), using $v_{N+1}=v_N=1$ and (\ref{neq_G}), we obtain
\begin{equation}
\label{neq_Zamani_2}
f(x^{N+1})-f(\hat{x})\leqslant\frac{R^2v_0^2}{2h_{N+1}}+\frac{1}{2h_{N+1}}\sum_{k=1}^{N-1}{h_k^2v_k^2\|g^{k}\|^2}+
\\\frac{G^2h_N^2}{2h_{N+1}}+\dfrac{G^2}2h_{N+1}.
\end{equation}
Due to (\ref{h_k}), (\ref{sim}), and Remark~\ref{remark:h_N+1} ($h_{N+1} := h_N$), we have
\begin{equation} 
\label{h_sim1} 
\frac{RN^{-\gamma}}{G\sqrt{N^{2\delta}+1}}\leqslant h_N
\leqslant\frac RGN^{-\gamma-\delta},
\end{equation}
\begin{equation} 
\label{h_sim2}  
\frac{RN^{-\gamma}}{G\sqrt{N^{2\delta}+1}}\leqslant h_{N+1}
\leqslant\frac{R}{G}N^{-\gamma-\delta}.
\end{equation}

To prove (\ref{neq_Zamani_3}), we upper bound each term from the right-hand side of inequality~(\ref{neq_Zamani_2}) separately.

\begin{enumerate}

\item Using Corollary \ref{col_vk_neq} for $k=0$ and inequality~(\ref{h_sim1}), we obtain that $v_0^2\leqslant\dfrac{1}{2N+1}$ and the first term is bounded as
\begin{eqnarray}
    \frac{R^2v_0^2}{2h_{N+1}} &\leqslant& \frac{R^2}{2h_{N+1}(2N+1)}  \leqslant \frac{R^2}{2(2N+1)}\frac{G\sqrt{N^{2\delta}+1}}{RN^{-\gamma}}\notag \\
    &=& 
\frac{RG}{2}\frac{{N^{\gamma}}{\sqrt{N^{2\delta}+1}}}{2N+1}.\label{neq1}
\end{eqnarray}

\item Now we estimate the sum $\frac{1}{2h_{N+1}}\sum_{k=1}^{N-1}{h_k^2v_k^2\|g^{k}\|^2}$. Using Corollary \ref{col_vk_neq}  
and formula (\ref{h_k}), we have $v_{k}^2\leqslant\dfrac{1}{2(N-k)+1}$ and obtain
\begin{multline*} 
\frac{1}{2h_{N+1}}\sum_{k=1}^{N-1}{h_k^2v^2_k\|g^{k}\|^2}\leqslant\dfrac{1}{2h_{N+1}}\sum\limits_{k=1 }^{N-1}\frac{h_k^2\|g^k\|^2}{2(N-k)+1}\leqslant\dfrac{1}{4h_{N+1}}\sum\limits_{k=1}^{N-1}\frac{h_k^2\|g^k\|^2}{N-k}= 
\\\dfrac{h^2}{4h_{N+1}}\sum\limits_{k=1}^{N-1}\frac{\|g^k\|^2}{(N-k)\left(G^2+\sum\limits_{i=1}^{k}\|g^i\|^2\right)}.
\end{multline*}
The multiplier in front of the sum is upper-bounded as:
$$ 
\dfrac{h^2}{4h_{N+1}} \overset{\eqref{h_sim2}}{\leqslant} \dfrac{R^2N^{-2\gamma}}{4}\frac{G\sqrt{N^{2\delta}+1}}{RN^{-\gamma}}=\frac{RG}{4}N^{-\gamma}\sqrt{N^{2\delta}+1}.
$$
To estimate the sum $\sum_{k=1}^{N-1}\frac{\|g^k\|^2}{(N-k)(G^2+\sum_{i=1}^{k}\|g^i\|^2)}$, we apply Lemma \ref{lem_x} for $y_k=\dfrac{\|g^k\|^2}{G^2}$, $k=1,\ldots,N-1$.
Then
\begin{eqnarray*}
    \sum\limits_{k=1}^{N-1}\dfrac{\|g^k\|^2}{(N-k)\left(G^2+\sum\limits_{i=1}^{k}\|g^i\|^2\right)} &=& \sum\limits_{k=1}^{N-1}\dfrac{\nicefrac{\|g^k\|^2}{G^2}}{(N-k)\left(1+\sum\limits_{i=1}^{k}\nicefrac{\|g^i\|^2}{G^2}\right)}\\
    &\leqslant&  
    \frac{4\log (N^{2\delta})+5}{\max\{N^{2\delta}-1, 1\}}.
\end{eqnarray*}
Hence,
\begin{equation} 
\label{neq2}
\dfrac{1}{2h_{N+1}}\sum\limits_{k=1}^{N-1}{h_k^2v_k^2\|g^{k}\|^2}\leqslant\frac{RG}{4}N^{-\gamma}\frac{\sqrt{N^{2\delta}+1}}{\max\{N^{2\delta}-1, 1\}}(4\log (N^{2\delta})+5).
\end{equation}

\item Taking into account inequalities (\ref{h_sim1}) and (\ref{h_sim2}), for the $3$-rd term in the right-hand side of (\ref{neq_Zamani_2}) we obtain
\begin{equation}
\label{neq3}
\frac{G^2h_N^2}{2h_{N+1}}\leqslant\frac{G^2}{2}\frac {R^2}{G^2}N^{-2\gamma-2\delta}\frac{G\sqrt{N^{2\delta}+1}}{RN^{-\gamma}}=\frac{RG}{2}N^{-\gamma-2\delta}\sqrt{N^{2\delta}+1}.
\end{equation}

\item Given inequality (\ref{h_sim2}), for the last term in the right-hand side of (\ref{neq_Zamani_2}) we have
\begin{equation}
\label{neq4}
\dfrac{G^2}2h_{N+1}\leqslant\frac{G^2}{2}\frac{R}{G}N^{-\gamma-\delta}=\frac{RG}{2}N^{-\gamma-\delta}.
\end{equation}

\end{enumerate}

Summing inequalities (\ref{neq1})--(\ref{neq4}) and taking $\hat x = x^*$, we obtain (\ref{neq_Zamani_3}).

\section{Proof of Theorem~\ref{thm:lower_bound}}\label{appendix:lower_bound}

Recall from Corollary~\ref{cor:main} that
\begin{equation*}
f(x^{N+1}) - f(x^*)
= O\left(
\max\left\{
\frac{GR}{N^\gamma},
\frac{GR}{N^{\frac12-\gamma}}
\right\}
\right),
\end{equation*}
We prove that this upper bound is tight by considering two possible situations.

\paragraph{Case 1: $\gamma \in (0,\frac14]$.} Let $X=\R$ and consider the problem
\begin{equation*}
\min_{x\in\R} f(x),
\quad
f(x)=G|x-x_1|,
\end{equation*}
for given $x^1 \in \R$. Then $x^*=x^1$, and Assumptions~\ref{as:convexity} and \ref{as:bounded_subgrad} hold. Since $g^k$ may be chosen arbitrarily from $\partial f(x^k)$ and $\partial f(x^1)=[-G,G]$, we choose
\begin{equation*}
g^k=0 \quad \text{for } k=1,\dots,N-1,
\quad
g^N=G.
\end{equation*}
Then $x^k=x^1$ for $k=1,\dots,N$, and therefore
\begin{equation*}
G^2+\sum_{i=1}^{N}\|g^i\|^2
= G^2+G^2
= 2G^2 \Longrightarrow h_N = \frac{h}{\sqrt2\,G}.
\end{equation*}
The last iterate satisfies
\begin{equation*}
x^{N+1}
= x^1 - h_N G
= x^1 - \frac{h}{\sqrt2}.
\end{equation*}
Since $h= \nicefrac{R}{N^\gamma}$ for some $R > 0$, we obtain
\begin{equation*}
f(x^{N+1})-f(x^\ast)
= G|x^{N+1}-x^1|
= \frac{Gh}{\sqrt2}
= \frac{GR}{\sqrt2\,N^\gamma} = \Omega\left(\frac{GR}{N^\gamma}\right).
\end{equation*}

\paragraph{Case 2: $\gamma  > \frac{1}{4}$.} Let $X=\R^d$ and define
\begin{equation}
f(x)=G\|x\|_\infty
= G\max_{1\le i\le d}|x_i|.
\end{equation}
Then $x^\ast=0$, and Assumptions~\ref{as:convexity} and \ref{as:bounded_subgrad} hold. Without loss of generality, we choose the initial point
\begin{equation*}
x^1=\left(\frac{R}{\sqrt d},\dots,\frac{R}{\sqrt d}\right)^\top,
\quad
\|x^1\| =R.
\end{equation*}
Indeed, for arbitrary initialization $x^1$, one can always change the coordinates in such a way that $x^1$ is defined as written above. Note that $\partial f(x)
= G\,\mathrm{conv}\{e_i \operatorname{sign}(x_i): i\in I(x)\}$, where $e_i$ is the $i$-th basis vector and $I(x)=\{i:\ |x_i|=\|x\|_\infty\}$. At each iteration, we choose
\begin{equation*}
g^k = Ge_{i_k} \operatorname{sign}(x_{i_k}^k) \quad \text{for some}\quad i_k \in I(x^k).
\end{equation*}
In this case, only the $i_k$-th coordinate is updated in iterations $k$ and $\|g^k\|_1 = \|g^k\|=G$ for all $k = 1,\ldots, N$. After $N$ iterations,
\begin{equation*}
x^{N+1}
= x^1 - \sum_{k=1}^N h_k g^k,
\end{equation*}
hence, the total $\ell_1$-distance ``traveled'' by the method is bounded as
\begin{equation*}
\|x^{N+1}-x^1\|_1
\leqslant 
\sum_{k=1}^N h_k\|g^k\|_1
= G\sum_{k=1}^N h_k.
\end{equation*}
Using
\begin{equation*}
h_k
= \frac{h}{\sqrt{G^2+\sum_{i=1}^k\|g^i\|^2}}
= \frac{h}{G\sqrt{1+k}},
\end{equation*}
we obtain
\begin{equation*}
\sum_{k=1}^N h_k
= \frac{h}{G}\sum_{k=1}^N\frac1{\sqrt{1+k}}
\leqslant \frac{h}{G}\int_1^{N+1} \frac{dx}{\sqrt x}
\leqslant \frac{2h}{G}\sqrt N.
\end{equation*}
Therefore, $\|x^{N+1}-x^1\|_1
\leqslant 2h\sqrt N
= 2R N^{\frac12-\gamma}$. Since $\|x^{N+1}-x^1\|_1 = \sum_{i=1}^d|x_{i}^{N+1}-x_{i}^1|$, there exists
$i\in\{1,\dots,d\}$ such that
\begin{equation*}
|x_{i}^{N+1}-x_{i}^1|
\leqslant \frac{2R N^{\frac12-\gamma}}{d}.
\end{equation*}
Choosing $d = \max\left\{\lceil 16 N^{1-2\gamma} \rceil, 1\right\}$, we ensure that
\begin{equation*}
|x_{i}^{N+1}-x_{i}^1|
\leqslant \frac{2R N^{\frac12-\gamma}}{d}
\leqslant \frac{R}{2\sqrt d}.
\end{equation*}
Hence,
\begin{equation*}
x_{i}^{N+1}
\geqslant x_{i}^1 - |x_{i}^{N+1}-x_{i}^1| \geqslant \frac{R}{\sqrt d}-\frac{R}{2\sqrt d}
= \frac{R}{2\sqrt d}.
\end{equation*}
Therefore, taking into account that $d = \max\left\{\lceil 16 N^{1-2\gamma} \rceil, 1\right\}$, we get
\begin{eqnarray*}
    f(x^{N+1})-f(x^*)
&=& G\|x^{N+1}\|_\infty
\geqslant \frac{GR}{2\sqrt d} = \frac{GR}{2\sqrt{\max\left\{\lceil 16 N^{1-2\gamma} \rceil, 1\right\}}}\\
&=& \Omega\left(
\min\left\{\frac{GR}{N^{\frac12-\gamma}}, GR\right\}
\right).
\end{eqnarray*}

\paragraph{Final bound.} Combining Cases 1 and 2, we conclude that for any
$\gamma > 0$,
\begin{equation*}
f(x^{N+1})-f(x^*)
= \Omega\left(
\max\left\{
\frac{GR}{N^\gamma},
\min\left\{\frac{GR}{N^{\frac12-\gamma}}, GR\right\}
\right\}
\right).
\end{equation*}

\end{document}